\documentclass[a4paper]{amsart}
\usepackage{hyperref}
\usepackage{pgffor}
\newcounter{SortListTotal}
\usepackage{amsmath,amsthm,wasysym,amssymb,amscd,mathrsfs}
\usepackage[all]{xy}
\usepackage{graphicx}
\usepackage{color}
\usepackage{combelow}
\usepackage{enumerate}

\newcommand{\sortitem}[2]{\expandafter\def\csname
SortListItem#1\endcsname{#2}\stepcounter{SortListTotal}}

\newcommand{\printsortlist}{\foreach\currentlistitem in{1,2,...,\value{SortListTotal}}{\item[\currentlistitem]\csname SortListItem\currentlistitem\endcsname}\setcounter{SortListTotal}{0}}

\setcounter{tocdepth}{2}

\setcounter{MaxMatrixCols}{10}

\newtheorem{theorem}{Theorem}
\newtheorem{thm}{Dirac Version of Theorem}
\newtheorem{prop}{Dirac Version of Proposition}
\newtheorem{cor}{Dirac Version of Corollary}
\newtheorem*{cor3}{Dirac Version of Corollary 3}
\newtheorem*{claim*}{Claim}
\newtheorem*{definition*}{Definition}

\newtheorem*{theorem*}{Theorem}

\newtheorem{corollary}{Corollary}
\newtheorem{definition}{Definition}
\newtheorem{example}{Example}
\newtheorem{lemma}{Lemma}
\newtheorem{proposition}{Proposition}
\newtheorem{remark}{Remark}

\newcommand{\diffto}{\xrightarrow{\raisebox{-0.2 em}[0pt][0pt]{\smash{\ensuremath{\sim}}}}}

\newcommand{\rmap}{\longrightarrow}

\newcommand{\ro}{\mathfrak{o}}

\newcommand{\w}{\omega}

\newcommand{\dd}{\mathrm{d}}

\newcommand{\R}{\mathbb{R}}
\newcommand{\PD}{\mathrm{PD}}

\begin{document}
\title{The homology class of a Poisson transversal}
\author{Pedro Frejlich}
\address{Universidade Federal do Rio Grande do Sul, Campus Litoral Norte, Rodovia RS 030, 11.700 -- Km 92, Emboaba -- Tramandaí, RS, CEP 95590-000}
\email{frejlich.math@gmail.com}
\author{Ioan M\u{a}rcu\cb{t}}
\address{Radboud University Nijmegen, IMAPP, 6500 GL, Nijmegen, The Netherlands}
\email{i.marcut@math.ru.nl}
\begin{abstract}
This note is devoted to the study of the homology class of a compact Poisson transversal in a Poisson manifold. For specific classes of Poisson structures, such as unimodular Poisson structures and Poisson manifolds with closed leaves, we prove that all their compact Poisson transversals represent non-trivial homology classes, generalizing the symplectic case. We discuss several examples in which this property does not hold, as well as a weaker version of this property, which holds for log-symplectic structures. Finally, we extend our results to Dirac geometry.
\end{abstract}
\maketitle
\tableofcontents
\setcounter{tocdepth}{1}

\subsection{Introduction}

A Poisson transversal of a Poisson manifold is an embedded submanifold which meets symplectic leaves transversally and symplectically. They go back to as far as P.~Dirac \cite{Dirac}, and play a key role in many constructions since the beginnings of Poisson Geometry \cite{Wein}, where small complementary transversals to symplectic leaves are used to describe the local structure. In many respects, they are the natural generalization of symplectic submanifolds: a version of Weinstein's symplectic tubular neighborhood theorem holds around Poisson transversals \cite{PT1}; in integrable Poisson manifolds, they give rise to symplectic subgroupoids \cite{CrFe2,PT1.5}.

In a symplectic manifold, any nonempty, compact symplectic submanifold represents a non-trivial homology class. This paper examines the analogous property in Poisson geometry. We say a Poisson manifold has the \textbf{HNPT property} (homologically non-trivial Poisson transversals) if its nonempty compact Poisson transversals represent non-trivial homology classes, and that it has the \textbf{weak HNPT property} if they represent non-trivial homology classes in their saturation. These properties are studied for specific classes of Poisson manifolds. Our main results are the following. Unimodular Poisson manifolds have the HNPT property (Theorem \ref{thm : main}); Poisson manifolds with closed leaves have the HNPT property (Theorem \ref{thm : all leaves closed implies HNPT}); log-symplectic Poisson manifolds have the weak HNPT property (Theorem \ref{thm : log weak HNPT}); in Theorem \ref{thm : closed and compact PTs and proper Poisson maps} we discuss functoriality of the HNPT property under proper Poisson maps, and, for completeness, we prove also that unimodularity is preserved under proper Poisson maps that are surjective submersions (Proposition \ref{pro : unimodular poisson}). In section \ref{sec : examples} we discuss examples of Poisson manifolds which fail to have the (weak) HNPT property.

Most of our results generalize to Dirac geometry, and the necessary adaptations are presented in section \ref{sec : HNPT Dirac}.

The natural homology class of a Poisson transversal lives in the homology twisted by the orientation bundle (even in the orientable case), and in Appendix A we give a brief outline of the needed theory with references.

In Appendix B we prove a result which is used to deduce Theorem \ref{thm : closed and compact PTs and proper Poisson maps}, namely that closed leaves of Poisson manifolds are embedded submanifolds (and generalizations of it). Although this fact seems well-known, a complete proof is hard to find in the literature.

\subsection*{Acknowledgements}
We thank Rui Loja Fernandes for suggesting the alternative proof of Corollary \ref{cor : no H 1}.

P.F.\ was supported by the Nederlandse Organisatie voor Wetenschappelijk Onderzoek (Vrije Competitie grant ``Flexibility and Rigidity of Geometric Structures'' 612.001.101) and by IMPA (CAPES-FORTAL project). I.M.\ was supported by the Nederlandse Organisatie voor Wetenschappelijk Onderzoek (Veni grant 613.009.031) and the National Science Foundation (grant DMS 14-05671).

\subsection{The homology class of a compact Poisson transversal}\label{sec : The homology class}

It will be convenient to use (co-)homology of a manifold $M$ twisted by the orientation bundle $\ro_M$ (see Appendix A for an overview of the used material and for notation); the resulting cohomology and homology groups will be denoted by:
\[H^{\bullet}(M,\ro_M),\ \ \ H_{\bullet}(M,\ro_M).\]

Given a Poisson manifold $(M,\pi)$ of dimension $m$, and an embedded submanifold $X\subset M$ of codimension $2q$ (Poisson transversals always have even codimension), the Poisson tensor induces a canonical section of the top exterior power of the normal bundle of $X$:
\[\mathrm{pr}(\pi^q|_X) \in \Gamma(\wedge^{2q} NX), \ \ \  \ NX:=TM|_X/TX,\]
where $\mathrm{pr}:\wedge TM|_X\to \wedge NX$ is the natural projection. The condition that $X$ be a Poisson transversal is equivalent to the section $\mathrm{pr}(\pi^q|_X)$ being nowhere-vanishing. Therefore, a Poisson transversal has a \textbf{canonical coorientation} induced by the section $\mathrm{pr}(\pi^q|_X)$. Thus, a compact Poisson transversal $X\subset (M,\pi)$ has a canonical \textbf{homology class} in homology with local coefficients in $\ro_M$ (see Appendix A):
\[[X]\in H_{m-2q}(M,\ro_M).\]

For instance, a symplectic manifold $(M,\omega)$ has a canonical orientation given by $\omega^{\mathrm{top}}$, which induces an isomorphism of flat bundles $\ro_M\simeq M\times \R$. Any compact Poisson transversal $X$ in $(M,\omega)$ (i.e.\ symplectic submanifold) has a non-trivial class in $H_{2p}(M,\R)\simeq H_{2p}(M,\ro_M)$, where $2p=\mathrm{dim}(X)$; this is simply because the closed form $\omega^{p}$ restricts to a volume form on $X$, and therefore their pairing is non-trivial: \[\langle[\omega^p],[X]\rangle=\int_X\omega^p>0.\]

We introduce terminology for Poisson manifolds having the analogous property:

\begin{definition}
A Poisson manifold $(M,\pi)$ is said to have {\bf homologically nontrivial Poisson transversals} (or the {\bf HNPT property}) if the homology class \[[X] \in H_{\bullet}(M,\ro_M)\] of any of its compact, nonempty Poisson transversals $X$ is nontrivial.
\end{definition}

\subsection{Examples of Poisson manifolds without the HNPT property}\label{sec : examples}

The examples given here are all of 3-dimensional orientable Poisson manifolds which do not have the HNPT property. In dimension three, the symplectic leaves are either 0-dimensional (i.e.\ zeroes of the Poisson structure) or 2-dimensional. Therefore, a nonempty, compact, connected Poisson transversal is the same as a circle which does not pass through the singular locus and meets the 2-dimensional leaves transversally.

\begin{example}[3-dimensional Lie algebras]\label{ex : 3Lie}
The dual vector space of a Lie algebra $(\mathfrak{g},[\cdot,\cdot])$ is canonically a Poisson manifold $(\mathfrak{g}^*,\pi_{\mathfrak{g}})$, with Poisson structure given by \[\pi_{\mathfrak{g},\xi}(X,Y)=\xi\circ [X,Y],\ \ \textrm{for}\ \xi\in \mathfrak{g}^*, \ X,Y\in T^*_{\xi}\mathfrak{g}^*\simeq \mathfrak{g}.\]
The symplectic leaves of $\pi_{\mathfrak{g}}$ are the coadjoint orbits.

An interesting problem is to give a Lie-theoretic characterization of the Lie algebras whose duals admit compact Poisson transversals. Here, we discuss this problem in detail for the duals of the 3-dimensional Lie algebras. The semisimple ones, $\mathfrak{sl}_2(\R)$ and $\mathfrak{so}_3(\R)$, admit no transverse circles, as one can easily see from their symplectic foliations:
\[
\includegraphics[height=2cm]{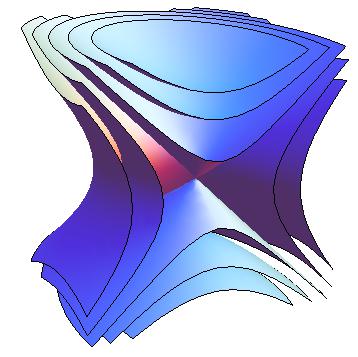} \ \ \ \ \ \ \ \  \ \ \ \ \ \ \ \ \includegraphics[height=2cm]{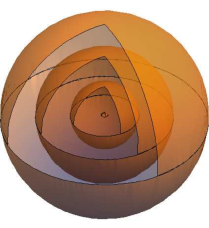}
\]

For the Poisson structures $\pi_{\mathfrak{g}}$ corresponding to the remaining 3-dimensional Lie algebras, there exist coordinates $(x,y,z)$ such that:
\[\pi_{\mathfrak{g}}=X\wedge \frac{\partial}{\partial z}, \]
where $X$ is a linear vector field on $\R^2$, i.e.\
\[X=(ax+by)\frac{\partial}{\partial x}+(cx+dy)\frac{\partial}{\partial y}.\]
The 2-dimensional leaves of the Poisson structure are of the form $C\times \mathbb{R}$, for $C\subset \R^2$ a nontrivial flow line of $X$, and the singular points are of the form $(p,z)$ where $p$ is a zero of $X$.

Poisson structures of this type are easily classified, e.g.\ using the classification of 3-dimensional Lie algebras, or directly: conjugating the matrix $A=\begin{pmatrix}
                    a & b \\
                    c & d \\
                  \end{pmatrix}$
corresponds to linear isomorphisms of $\R^2$, and rescaling the matrix by $t\neq 0$ is equivalent to rescaling the $z$-direction by $1/t$. Following this direct approach, one obtains that $\pi_{\mathfrak{g}}$ (equivalently, $X$) admits a transverse circle if and only if the real part of the two eigenvalues of $A$ have the same sign (and are non-zero); i.e.\ if the flow of $X$ or $-X$ is a contraction towards the origin (this condition is standard in linearization results for vector fields \cite{Sterberg}). Under this condition, all resulting foliations are homeomorphic to each other: the 2-dimensional leaves look like the pages of an open book whose core consists of 0-dimensional leaves. The cases $A=\mathrm{Id}$ and $A$ with eigenvalues that are neither real nor purely imaginary are plotted below:
\[\includegraphics[height=3cm]{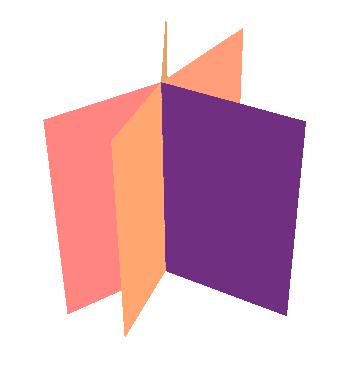}\ \ \ \ \ \ \ \ \includegraphics[height=3cm]{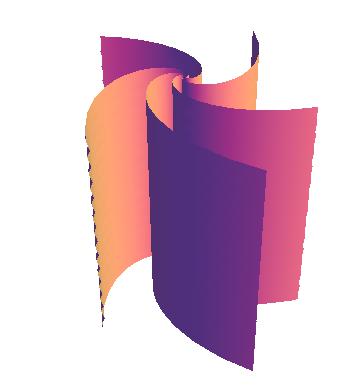}\]
One can easily construct a transverse circle $X$, which goes around the binding of the book. In this case, the homology class $[X]$ of the Poisson transeversal is trivial in $H_1(\mathfrak{g}^*,\R)=0$; in particular, the HNPT property does not hold.
\end{example}

\begin{example}[Reeb foliation of the three-sphere]\label{ex : reeb-poisson}
Consider the Reeb foliation $\mathcal{F}_{\mathrm{R}}$ on the 3-sphere $\mathbb{S}^3$ (see e.g.\ \cite[Section 1.1(5)]{MoerMrc}):
\[
\setlength{\unitlength}{0.8cm}
\begin{picture}(6,2.7)
\put(-1.6,-0.9){\includegraphics[width=3cm,height=3cm]{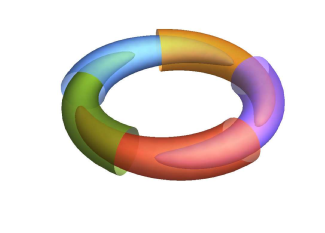} }
\put(6.6,-0.9){\includegraphics[width=3cm,height=3cm]{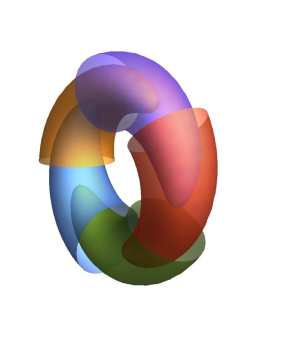} }
\put(-3,1){\Huge $\mathbb{S}^3=$}
\put(2,1){\Huge $+$}
\put(3,-0.3){\includegraphics[width=2.25cm,height=2.25cm]{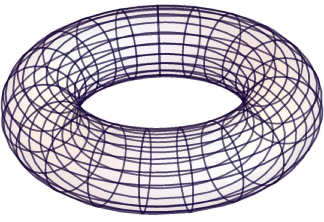} }
\put(6.1,1){\Huge $+$}
\end{picture}
\]
i.e., an unknotted torus $T$ decomposes $\mathbb{S}^3$ into two open solid tori $C_1$ and $C_2$; $T$ is a leaf, and each solid torus is foliated by discs which converge to the boundary in a flat fashion. A choice of volume form on the leaves of $\mathcal{F}_{\mathrm{R}}$ gives a Poisson structure $\pi$ on $\mathbb{S}^3$. The two central circles $X_i\subset C_i$, with $i=0,1$, are Poisson transversals in $(\mathbb{S}^3,\pi)$, and both are homologous to zero, since $H_1(\mathbb{S}^3,\R)=0$. Thus, $(\mathbb{S}^3,\pi)$ does not have the HNPT property.
\end{example}

\subsection{The weak HNPT property}

Even though the examples above do not have the HNPT property, they do satisfy a weak version of it. Namely, in Example \ref{ex : 3Lie} consider the open set $U\subset \R^3$ which is the complement of the binding. Then $U$ is saturated, the transverse circle $X$ is included in $U$, and moreover $[X]\neq 0$ in $H_1(U,\R)$. Moreover, one can easily see that the resulting Poisson manifold $(U,\pi_{\mathfrak{g}}|_{U})$ does have the HNPT property.

To put this example into perspective, we first introduce the following notion:

\begin{definition}
The {\bf saturation} of a Poisson transversal $X\subset (M,\pi)$, denoted by $\mathrm{St}(X)$, is the union of all leaves of $\pi$ which meet $X$.
\end{definition}

Note that $\mathrm{St}(X)$ is open in $M$, simply because transversality of $X$ implies that its image in the leaf space $M/\pi$ is open.

In the discussion of Example \ref{ex : 3Lie} above, we have that $\mathrm{St}(X)=U$, for any Poisson transversal $X$. For the Reeb foliation of Example \ref{ex : reeb-poisson}, we have that $\mathrm{St}(X_i)=C_i$. In fact, these two examples satisfy the following property, which is more natural from a Poisson-geometric point of view:

\begin{definition}
A Poisson manifold $(M,\pi)$ is said to have the {\bf weak HNPT property} if, for any of its compact, nonempty Poisson transversals $X$, the homology class $[X] \in H_{\bullet}(\mathrm{St}(X),\ro_{\mathrm{St}(X)})$ is nontrivial.
\end{definition}

We expect that Poisson manifolds which do not satisfy the weak HNPT property be rather rare; below, we construct such an example.

\begin{example}[Flat bundles over symplectic manifolds]\label{ex : uniformization}
Let $\varrho:M\to (B,\omega)$ be a compact, connected oriented fibre bundle over a symplectic manifold $(B,\omega)$. Assume that $\varrho$ admits a flat Ehresmann connection; in other words, assume that $M$ admits a foliation $\mathcal{F}$ which is complementary to the fibres of $\varrho$. Then there is an induced Poisson structure $\pi$ on $M$, with underlying foliation $\mathcal{F}$ and symplectic forms given by restrictions of ${\varrho}^*(\omega)$ to the leaves. The fibres of $\varrho$ are compact Poisson transversals of $(M,\pi)$. Moreover, the connectedness of $B$ implies that the saturation of each fibre is $M$. On the other hand, note that, up to a non-zero constant, $\varrho^*(\omega^{\mathrm{top}})$ is a representative of the Poincar\'e dual of any fibre of $\varrho$; therefore, the fibres of $\varrho$ are trivial in homology if and only if $\varrho^*(\w^{\mathrm{top}})$ is exact. So, if $\varrho^*(\w^{\mathrm{top}})$ is exact, $(M,\pi)$ does not have the weak HNPT property.

Such examples exist already in dimension 3. Let $\Sigma$ an oriented compact surface of genus $g\geq 2$, and let $\varrho:M\to \Sigma$ be a principal $\mathbb{S}^1$-bundle, with Chern class $c\in H^2(\Sigma,\mathbb{R})$. We will assume that $c$ satisfies:
\[0<|\langle c, [\Sigma]\rangle|\leq 2(g-1).\]
By \cite[Theorem 1.1]{Wood}, the second inequality implies that $\varrho:M\to \Sigma$ admits a transverse foliation $\mathcal{F}$. Let $\omega\in \Omega^2(\Sigma)$ be a symplectic structure on $\Sigma$. Since $c$ can be represented by the curvature of a principal $\mathbb{S}^1$-connection $\alpha$, we have that $\varrho^*(c)=[\dd\alpha]=0$ in $H^2(M,\mathbb{R})$. One the other hand, since $\Sigma$ is 2-dimensional and $c\neq 0$, we have that $[\omega]$ is a multiple of $c$, hence also $\varrho^*([\omega])=0$. We conclude that the Poisson manifold $(M,\pi)$ built out of $\mathcal{F}$ and $\varrho^*(\omega)$ does not have the weak HNPT property. A version of this example is discussed also in \cite[Section 5]{Weinst_mod}.
\end{example}

\begin{remark}
A Poisson manifold $(M,\pi)$ is called {\bf calibrated} if there exists a closed two-form $\w \in \Omega^2(M)$ which restricts on each leaf to the symplectic structure induced by $\pi$. Such Poisson manifolds are necessarily regular (this can be directly seen by using gauge transformations from Dirac geometry). Note that the Poisson structure constructed in Example \ref{ex : uniformization} is calibrated, even by an exact 2-form.

A theorem of D.\ Mart\'{i}nez-Torres \cite[Theorem 2]{DMT13} says that a corank one, compact, calibrated Poisson manifold has compact Poisson transversals of any even codimension, whose saturation is the entire manifold. This can be regarded as the odd-dimensional analogue of Donaldson's result on the existence of symplectic submanifolds \cite{Donaldson}.
\end{remark}

\subsection{Unimodular Poisson manifolds}

In this section we prove that a unimodular Poisson manifold has the HNPT property. Recall that an oriented Poisson manifold $(M,\pi)$ is called {\bf unimodular} \cite{Weinst_mod} if there exists a positive density $\mu\in \Omega^{\mathrm{top}}(M,\ro_M)$ which is invariant under all Hamiltonian vector fields, i.e.\
\[\mathscr{L}_{\pi^{\sharp}(\dd f)}\mu=0, \ \ \textrm{for all} \ f\in C^{\infty}(M).\]

For example, any symplectic manifold $(M,\omega)$ is unimodular. If $o_{\omega}\in \Gamma(\ro_M)$ is the canonical orientation induced by $\omega$, then the invariant positive density is given by
$\omega^{k}\otimes o_{\omega}\in \Omega^{2k}(M,\ro_M)$, where $2k=\mathrm{dim}(M)$.

An invariant density gives rise to several closed $\ro_M$-valued forms:
\begin{lemma}\label{lem : closed}
The positive density $\mu\in \Omega^{\mathrm{top}}(M,\ro_M)$ is invariant under all Hamiltonian vector fields if and only if $\dd\iota_{\pi}\mu=0$. In this case, the following forms are closed:
\[\mu,\ \ \iota_{\pi}\mu,\ \ \iota_{\pi^{2}}\mu,\ \ \ldots\ \iota_{\pi^q}\mu,\ \ldots.\]
\end{lemma}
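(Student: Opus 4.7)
The plan is to exploit Cartan calculus, first for ordinary vector fields and then for the Poisson bivector viewed as a multivector field. For the first assertion, since $\mu$ has top degree, $\dd\mu=0$ automatically, so Cartan's magic formula reduces to $\mathscr{L}_{X_f}\mu=\dd\iota_{X_f}\mu$ where $X_f=\pi^{\sharp}(\dd f)$. The key algebraic bridge to $\iota_\pi\mu$ is the standard identity
\[\iota_{\pi^{\sharp}(\alpha)}\eta \;=\; \iota_{\pi}(\alpha\wedge\eta) - \alpha\wedge\iota_{\pi}\eta,\]
valid for any $1$-form $\alpha$ and form $\eta$ (its derivation is a short local computation). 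Taking $\alpha=\dd f$ and $\eta=\mu$, the first term on the right vanishes for degree reasons, so $\iota_{X_f}\mu = -\dd f\wedge\iota_\pi\mu$, and therefore
\[\mathscr{L}_{X_f}\mu \;=\; \dd f\wedge\dd\iota_\pi\mu.\]

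One direction of the equivalence is now immediate: $\dd\iota_\pi\mu=0$ forces $\mathscr{L}_{X_f}\mu=0$ for every $f$. For the converse, the vanishing of $\dd f\wedge\dd\iota_\pi\mu$ for every smooth $f$ is a local condition; in any chart, specializing $f$ to each coordinate function $x^i$ exhibits $\dd\iota_\pi\mu$ as an $(n-1)$-form whose wedge with every $\dd x^i$ vanishes, so it must be zero.

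For the second assertion, I would invoke the Koszul--Brylinski calculus of multivector fields. Setting $\mathscr{L}_\pi := [\dd,\iota_\pi] = \dd\iota_\pi - \iota_\pi\dd$, the Jacobi identity $[\pi,\pi]_{\mathrm{SN}}=0$ becomes the operator relation $[\mathscr{L}_\pi,\iota_\pi] = \iota_{[\pi,\pi]_{\mathrm{SN}}} = 0$, so $\mathscr{L}_\pi$ commutes with $\iota_\pi$ (and hence with every power $\iota_\pi^{k-1}$). Under the standard convention one also has $\iota_{\pi^k}=\iota_\pi^k$. Applying the commutation relation $\dd\iota_\pi=\iota_\pi\dd+\mathscr{L}_\pi$ to $\iota_\pi^{k-1}\mu$ gives
\[\dd\iota_\pi^k\mu \;=\; \iota_\pi\dd\iota_\pi^{k-1}\mu + \iota_\pi^{k-1}\mathscr{L}_\pi\mu \;=\; \iota_\pi\dd\iota_\pi^{k-1}\mu,\]
since $\mathscr{L}_\pi\mu = \dd\iota_\pi\mu - \iota_\pi\dd\mu = 0$ by hypothesis. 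An induction on $k$, with base case $\dd\iota_\pi\mu=0$, then yields $\dd\iota_\pi^k\mu=0$ for all $k$.

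The main obstacle is purely bookkeeping: verifying the multivector Cartan identity $[\mathscr{L}_\pi,\iota_\pi]=\iota_{[\pi,\pi]_{\mathrm{SN}}}$ and the convention $\iota_{\pi^k}=\iota_\pi^k$ with correct signs. The orientation-bundle twist causes no extra difficulty, since $\ro_M$ carries a canonical flat connection defining $\dd$, while $\iota_\pi$ acts only on the form factor.
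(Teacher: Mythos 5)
Your proof is correct and follows essentially the same route as the paper's: the first part rests on the relation $\mathscr{L}_{\pi^{\sharp}(\dd f)}\mu=\pm\,\dd f\wedge\dd\iota_{\pi}\mu$ (which the paper quotes from Weinstein and you derive via Cartan calculus), and the second part is the same induction using the operator $\mathscr{L}_{\pi}=[\dd,\iota_{\pi}]$ together with the vanishing of $[\mathscr{L}_{\pi},\iota_{\pi^{k}}]=\iota_{[\pi,\pi^{k}]}$ coming from $[\pi,\pi]=0$. The only differences are cosmetic — your explicit coordinate argument for the converse and the opposite sign convention for $\mathscr{L}_{\pi}$ — so nothing further is needed.
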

\begin{proof}
The result is highly standard. The first part appears already in \cite{Weinst_mod}, and follows directly from the relation $\dd f\wedge \dd \iota_{\pi}\mu=\mathscr{L}_{\pi^{\sharp}(\dd f)}\mu$.

For the second part, consider the operator $\mathscr{L}_{\pi}=\iota_{\pi}\dd-\dd\iota_{\pi}$ acting on $\Omega^{\bullet}(M,\ro_M)$ (if $M$ is oriented, this is the usual operator computing Poisson homology). The Poisson condition yields:
\[\mathscr{L}_{\pi}\iota_{\pi^k}-\iota_{\pi^k}\mathscr{L}_{\pi}=\iota_{[\pi,\pi^k]}=0,\]
and explicitly,
\[\iota_{\pi}\dd\iota_{\pi^k}-\dd\iota_{\pi^{k+1}}-\iota_{\pi^{k+1}}\dd+\iota_{\pi^k}\dd\iota_{\pi}=0.\]
Applying this to $\mu$, we obtain that
\[\iota_{\pi}\dd\iota_{\pi^k}\mu=\dd\iota_{\pi^{k+1}}\mu.\]
Thus, the second part follows inductively.

There is also a Dirac geometric approach to the second part: unimodularity is equivalent to the existence of a closed, nowhere-vanishing section of the spinor bundle; in the Poisson case this means the existence of a volume form $\mu$ such that $e^{-\iota_{\pi}}\mu=\mu-\iota_{\pi}\mu+\frac{1}{2}\iota_{\pi^2}\mu-\ldots$ is closed (see \cite{Marco} and section \ref{sec : HNPT Dirac} for details).
\end{proof}

The lemma implies:

\begin{theorem}\label{thm : main}
A unimodular Poisson manifold has the HNPT property.
\end{theorem}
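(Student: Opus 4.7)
The plan is to mimic the symplectic argument by pairing $[X]$ with an explicit closed $\ro_M$-valued form of complementary degree whose integral over $X$ is manifestly positive. Lemma \ref{lem : closed} supplies the natural candidate: writing $2q$ for the codimension of the compact Poisson transversal $X\subset M$, the form $\iota_{\pi^q}\mu\in\Omega^{m-2q}(M,\ro_M)$ is closed. Using the canonical isomorphism $\ro_M\otimes\ro_M\simeq M\times \R$ and the fundamental class $[X]\in H_{m-2q}(M,\ro_M)$ coming from Appendix A, the pairing
\[\langle [\iota_{\pi^q}\mu], [X]\rangle=\int_X \iota_{\pi^q}\mu\]
lands in $\R$, and the theorem will follow once this integral is shown to be strictly positive.

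The heart of the argument is a pointwise computation of $(\iota_{\pi^q}\mu)|_X$. At each $p\in X$, the Poisson-transversal condition yields a natural splitting $T_pM=T_pX\oplus N_p$, where $N_p=\pi^{\sharp}_p\bigl((T_pX)^{\circ}\bigr)$, with respect to which $\pi_p=\pi_{X,p}+\pi_{N,p}$, and with $\pi_{N,p}\in\wedge^2 N_p$ the nondegenerate bivector whose $q$th power descends to the canonical coorientation section $\mathrm{pr}(\pi^q|_X)_p$. Expanding $\pi_p^q$ via the binomial formula and evaluating $\iota_{\pi^q}\mu$ on $m-2q$ vectors tangent to $X$, every mixed summand vanishes: it contributes fewer than $2q$ normal directions, which forces one of its tangential entries to collide linearly with a test vector and kill the $m$-form $\mu_p$. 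One therefore obtains the pointwise identity
\[(\iota_{\pi^q}\mu)|_X=\iota_{\mathrm{pr}(\pi^q|_X)}(\mu|_X).\]

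Combined with the canonical decomposition $\ro_M|_X\simeq \ro_X\otimes \ro_{NX}$ and the coorientation $\mathrm{pr}(\pi^q|_X)$, the right-hand side becomes a genuine top-degree positive density on $X$, because $\mu$ is positive by unimodularity. Integrating over the nonempty compact $X$ then yields a strictly positive real number, proving $[X]\neq 0$. The only delicate point in this plan is the orientation-bundle bookkeeping: confirming that $\int_X\iota_{\pi^q}\mu$ genuinely computes the $\R$-valued pairing of Appendix A, and that the sign conventions cause $\mathrm{pr}(\pi^q|_X)$ to yield a \emph{positive} rather than negative density on $X$. The geometric content itself is pointwise linear algebra and presents no serious obstacle.
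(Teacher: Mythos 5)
Your proposal is correct and follows essentially the same route as the paper: pair $[X]$ with the closed form $\iota_{\pi^q}\mu$ provided by Lemma \ref{lem : closed} and observe that, because $\mathrm{pr}(\pi^q|_X)$ is the coorientation section, this form restricts to a positive density on $X$, so the pairing is strictly positive. Your pointwise splitting $\pi=\pi_X+\pi_N$ and the binomial-expansion argument merely spell out the step the paper states in one line, and the sign/orientation bookkeeping you flag is exactly what the convention of section \ref{sec : The homology class} and Appendix A is set up to guarantee.
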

\begin{proof}
Let $X$ be a compact Poisson transversal of codimension $2q$ in a unimodular Poisson manifold $(M,\pi)$ of dimension $m$, and let $\mu$ be an invariant, positive density on $M$. By Lemma \ref{lem : closed} $\iota_{\pi^q}\mu$ is closed. Since ${\pi^q}$ induces a nowhere-vanishing section of the normal bundle of $X$, which induces the coorientation of $X$ (see section \ref{sec : The homology class}), we have that $\iota_{\pi^q}\mu$ restricts to a positive density on $X$. Thus,
\[\langle[\iota_{\pi^q}\mu],[X]\rangle=\int_{X}\iota_{\pi^q}\mu|_X>0,\]
and so $[X]\neq 0$ in $H_{m-2q}(M,\ro_M)$.
\end{proof}

\begin{example}[Lie Algebras]
The canonical Poisson structure $(\mathfrak{g}^*,\pi_{\mathfrak{g}})$ on the dual of a Lie algebra $\mathfrak{g}$, is unimodular if and only if $\mathfrak{g}$ is unimodular as a Lie algebra, in the sense that $\wedge^{\mathrm{top}}\mathfrak{g}$ is trivial as a representation \cite[Section 4]{Weinst_mod}.

For the 3-dimensional Lie algebras of Example \ref{ex : 3Lie}, the two semisimple Lie algebras are unimodular, whereas the Lie algebras given by a matrix $A$ are unimodular if and only if $\mathrm{tr}(A)=0$. Hence, in these cases, Theorem \ref{thm : main} excludes the existence of compact Poisson transversals.
%\begin{align*}
%\dd (\iota_{\partial_z\wedge X}\mu)&=\dd(\iota_X\dd x\wedge\dd y)=\\
%&=\dd((ax+by)\dd y-(cx+dy)\dd x)=(a+d)\dd x\wedge\dd y.
%\end{align*}
%
%\begin{align*}
%\dd (\iota_{x\partial_y\wedge \partial_z+c.p.}\mu)&=0
%\end{align*}
%\[[h,x]=x,\ [h,y]=-y,\ [x,y]=-2h;\]
%\[\pi=\partial_h(x\partial_x-y\partial_y)-2h\partial_x\wedge\partial_y.\]
%\[\dd \iota_{\pi}\mu=\dd(x\dd y+y\dd x)=0.\]
\end{example}

\begin{example}[Regular Poisson structures]\label{ex : reg}
For a regular Poisson manifold $(M,\pi)$, unimodularity is equivalent to unimodularity of the underlying foliation $\mathcal{F}$, in the sense that there exists a closed, nowhere-vanishing form $\xi\in \Omega^{q}(M,\ro_M)$, where $q=\mathrm{codim}(\mathcal{F})$ such that $\mathcal{F}$ is given by kernel of $\xi$ \cite{Weinst_mod}.

By Theorem \ref{thm : main}, the foliations in Examples \ref{ex : reeb-poisson} and \ref{ex : uniformization} are not unimodular, because they have compact Poisson transversals with trivial homology class. That these examples are not unimodular was discussed also in \cite{Weinst_mod}.
\end{example}

\subsection{The HNPT property under proper Poisson maps}\label{sec : HNPT under maps}

As shown in \cite{PT1}, Poisson transversals behave well under Poisson maps. Using this, we prove that the HNPT property behaves reasonably well under proper Poisson maps, a result which will be used in the following sections.

\begin{theorem}\label{thm : closed and compact PTs and proper Poisson maps}
Let $f:(P,\pi_P)\to (M,\pi_M)$ be a proper Poisson map. If $(P,\pi_P)$ has the HNPT property, then the homology class of every compact Poisson transversal $X\subset M$ which meets $f(P)$ is nontrivial.

In particular, if $f$ is onto then $(M,\pi_M)$ has the HNPT property.
\end{theorem}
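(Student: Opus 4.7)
The plan is to reduce the claim to an application of the HNPT property of $P$: the preimage $X':=f^{-1}(X)$ will play the role of ``compact nonempty Poisson transversal of $P$'', and the nontriviality of its homology class will transfer back to $M$ by naturality of the Poincar\'e dual under proper transverse pullback.

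First, I would verify that $X'$ meets the three requirements needed to invoke HNPT on $(P,\pi_P)$. Nonemptiness is immediate from the hypothesis that $X$ meets $f(P)$; compactness follows from properness of $f$ together with compactness of $X$; and the fact that $X'$ is a Poisson transversal of the same codimension $2q$ as $X$ --- in particular, that $f$ is automatically transverse to $X$ --- is the main structural result of \cite{PT1} on the behaviour of Poisson transversals under Poisson maps. Applying HNPT to $(P,\pi_P)$ then yields
\[[X']\ \neq\ 0\ \ \text{in}\ \ H_{p-2q}(P,\ro_P),\qquad p:=\dim P.\]

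Next, I would argue by contraposition: assuming $[X]=0$ in $H_{m-2q}(M,\ro_M)$, deduce $[X']=0$. Via the orientation-twisted Poincar\'e duality recalled in Appendix A, the cooriented classes $[X]$ and $[X']$ correspond to Poincar\'e duals
\[\PD[X]\in H^{2q}_c(M,\R),\qquad \PD[X']\in H^{2q}_c(P,\R),\]
and properness of $f$ makes the pullback $f^*:H^{2q}_c(M,\R)\to H^{2q}_c(P,\R)$ well-defined. The crux is then the identity
\[\PD[X']\ =\ f^*\PD[X],\]
which I would prove by representing $\PD[X]$ through the Thom class of the normal bundle $NX$: a tubular neighbourhood of $X$ realises $\PD[X]$ by extending such a Thom class by zero, and since $f$ is transverse to $X$, the isomorphism $NX'\cong f^*NX$ identifies the $f$-pullback of this form with a Thom-class representative of $\PD[X']$. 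Granted this identity, $[X]=0$ forces $\PD[X']=f^*\PD[X]=0$, hence $[X']=0$, contradicting the preceding paragraph. The ``in particular'' statement is then immediate: if $f$ is surjective, every nonempty compact Poisson transversal of $M$ meets $f(P)$.

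I expect the main technical point to be the bookkeeping with twisted coefficients, and in particular checking that the coorientation of $X'$ induced by $\pi_P$ coincides with the one pulled back from $X$ via $f$. Because $f$ is a Poisson map, $\pi_P$ is $f$-related to $\pi_M$, so $\pi_P^q|_{X'}$ projects to $NX'\cong f^*NX$ as the pullback of the section $\mathrm{pr}(\pi_M^q|_X)$ of $NX$; this is exactly what is needed for the Thom-class computation above to respect the coorientations and produce the naturality formula with the correct sign. Once those conventions from Appendix A are aligned, the geometric content --- preimage of a transversal and pullback of a Thom class --- is entirely standard.
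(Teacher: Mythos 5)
Your proposal is correct and follows essentially the same route as the paper's proof: pass to $Y=f^{-1}(X)$ using \cite[Lemma 7]{PT1}, apply the HNPT property of $(P,\pi_P)$, and transfer nontriviality via the identity $f^*(\PD[X])=\PD[f^{-1}(X)]$, established by a tubular-neighbourhood/Thom-class argument in which the Poisson condition guarantees the fibrewise coorientations are matched. The only difference is cosmetic: you argue by contraposition, while the paper runs the same chain of implications directly.
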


\begin{proof}
Let $X\subset M$ be a compact Poisson transversal satisfying $f(P)\cap X\neq \varnothing$. Let $p=\mathrm{dim}(P)$, $m=\mathrm{dim}(M)$ and $2q=\mathrm{codim}(X)$. By \cite[Lemma 7]{PT1}, $f$ is transverse to $X$ and $Y:=f^{-1}(X)$ is a Poisson transversal in $(P,\pi_P)$. Our assumptions imply that $Y$ is compact and nonempty, and so, since $(P,\pi_P)$ has the HNPT property, we have that the homology class $[Y]$ is nontrivial in $H_{p-2q}(P,\ro_P)$. This is equivalent to the non-triviality of its compactly supported Poincar\'e dual
\[\PD[Y]\in H^{2q}_{c}(P,\R)\] (see Appendix A for Poincar\'e duals in our setting). The conclusion follows because
\begin{equation}\label{eq : PD_funct}
f^*(\PD[X])=\PD[f^{-1}(X)]=\PD[Y]\neq 0 \ \ \textrm{in}\ \ \ H^{2q}_c(P,\R),
\end{equation}
and so $\PD[X]\neq 0$ in $H^{2q}_c(M,\R)$, which is equivalent to $[X]\neq 0$ in $H_{m-2q}(M,\ro_M)$. Identity (\ref{eq : PD_funct}) is a general property of the preimage of a compact submanifold via a smooth, proper map to which it is transverse, and fits into the classical theory of \emph{Umkehr maps} in (co-)homology. For a direct proof, adapt the arguments in \cite[Chapter 1 \S 6]{BottTu82} (see also Appendix A): transversality ensures the existence of tubular neighborhoods $NY\hookrightarrow P$ and $NX\hookrightarrow M$ of $Y$ and $X$, respectively, in which $f$ becomes a fibrewise linear isomorphism preserving the fibrewise orientations (this follows from the Poisson condition); $\PD[X]$ has a representative $\eta_X\in \Omega^{2q}_c(NX)$ whose integral along the fibres of $NX$ is one; the above imply that also $f^*(\eta_X)\in \Omega^{2q}_c(NY)$ has this property, and is therefore a representative of $\PD[Y]$.
\end{proof}

Since symplectic manifolds are unimodular, we obtain:

\begin{corollary}\label{cor : proper symplectic realizations}
A Poisson manifold which admits a surjective proper symplectic realization\footnote{Symplectic realizations are not assumed to be submersive} has the HNPT property.
\end{corollary}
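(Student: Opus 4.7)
The plan is to simply chain together the two main results already established in the paper. A symplectic realization of $(M,\pi_M)$ is, by definition, a Poisson map $f:(S,\omega)\to (M,\pi_M)$ from a symplectic manifold $(S,\omega)$. Since $(S,\omega)$ is symplectic, it is unimodular (as noted immediately before Theorem \ref{thm : main}: the invariant density is $\omega^{k}\otimes o_{\omega}$). Hence Theorem \ref{thm : main} applies and tells us that $(S,\omega)$ enjoys the HNPT property.

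Now the hypothesis that $f$ is proper and surjective is precisely what is needed to invoke the second half of Theorem \ref{thm : closed and compact PTs and proper Poisson maps}: a proper Poisson map whose domain has the HNPT property and which is onto transfers that property to its target. Applying this to $f:(S,\omega)\to(M,\pi_M)$ gives that $(M,\pi_M)$ has the HNPT property, which is the content of the corollary.

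There is no real obstacle here; the only subtlety worth flagging is the footnote, which reminds the reader that symplectic realizations are not required to be submersions. This is harmless because Theorem \ref{thm : closed and compact PTs and proper Poisson maps} (and the transversality lemma \cite[Lemma 7]{PT1} it relies on) does not demand submersivity of $f$ either: the Poisson condition alone is enough to ensure that the preimage $f^{-1}(X)$ of a Poisson transversal $X\subset M$ is again a Poisson transversal, with $f$ restricting to a fibrewise linear isomorphism between the normal bundles preserving the induced coorientations. So the argument goes through verbatim, and the proof amounts to the one-line reduction \textbf{unimodular} $\Rightarrow$ HNPT (Theorem \ref{thm : main}) combined with \textbf{proper surjective Poisson map} $\Rightarrow$ transfer of HNPT (Theorem \ref{thm : closed and compact PTs and proper Poisson maps}).
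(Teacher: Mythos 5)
Your proof is correct and is exactly the paper's argument: symplectic manifolds are unimodular, so Theorem \ref{thm : main} gives them the HNPT property, and Theorem \ref{thm : closed and compact PTs and proper Poisson maps} transfers it along the proper surjective realization. Your observation about submersivity being unnecessary is also consistent with the paper, since \cite[Lemma 7]{PT1} only needs $f$ to be a Poisson map.
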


This yields a criterion for the nonexistence of proper symplectic realizations:

\begin{corollary}\label{cor : no H 1}
A regular, corank-one Poisson structure on a compact, oriented manifold $M$ with $H_1(M,\mathbb{R})=0$ does not admit proper symplectic realizations.
\end{corollary}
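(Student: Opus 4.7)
The plan is to derive a contradiction via the unimodularity-based argument attributed in the acknowledgements to R.~L.\ Fernandes. Suppose for contradiction that $\mu : (S, \omega) \to (M, \pi)$ is a proper symplectic realization. My goal is to use Proposition~\ref{pro : unimodular poisson} to transport unimodularity from the (unimodular) symplectic manifold $(S,\omega)$ to $(M,\pi)$, and then obstruct this via the topology of $M$.

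The first step is to reduce to the case where $\mu$ is a surjective submersion, as required by Proposition~\ref{pro : unimodular poisson}. Since $\pi$ has constant rank $2k = \dim(M) - 1$ and $\mu$ is a Poisson map, at every point of $S$ the image of $d\mu$ contains the leaf direction, so $\mathrm{rank}(d\mu) \in \{2k, 2k+1\}$, with the submersive locus open and the leaf-mapping locus closed. I would argue that, after possibly restricting to components and combining openness of submersiveness, properness of $\mu$, and connectedness of $M$, one is either in the desired situation of a proper surjective submersion, or in the degenerate case that $\mu$ factors through a single (necessarily closed, by properness) symplectic leaf --- a situation that can be ruled out separately, for instance by invoking Theorem~\ref{thm : all leaves closed implies HNPT} in combination with the vanishing of $H_1(M,\mathbb{R})$.

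Granted that $\mu$ is a proper surjective Poisson submersion, Proposition~\ref{pro : unimodular poisson} yields that $(M,\pi)$ is unimodular. Example~\ref{ex : reg} then provides a closed, nowhere-vanishing form $\xi \in \Omega^1(M,\ro_M)$ whose kernel is $T\mathcal{F}$; orientability of $M$ trivializes $\ro_M$ and lets me view $\xi$ as an honest closed nowhere-vanishing real $1$-form on $M$. To finish, compactness of $M$ together with $H_1(M,\mathbb{R}) = 0$ gives $H^1(M,\mathbb{R}) = 0$ (by universal coefficients or de~Rham duality), so $\xi = df$ is exact. But a compact manifold admits no real-valued function with nowhere-vanishing differential, since the extrema of $f$ would be critical points --- the desired contradiction. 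The main obstacle will be the reduction step: the paper's convention (per the footnote to Corollary~\ref{cor : proper symplectic realizations}) explicitly allows non-submersive symplectic realizations, so carefully disposing of the degenerate case in which $\mu$ factors through a single symplectic leaf is the delicate point before Proposition~\ref{pro : unimodular poisson} applies.
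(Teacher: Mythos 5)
Your submersive case is fine, and it coincides with the alternative argument sketched right after the corollary in the paper (attributed there to R.~L.~Fernandes and explicitly restricted to submersive realizations): Proposition \ref{pro : unimodular poisson} gives unimodularity of $(M,\pi)$, Example \ref{ex : reg} converts this into a closed nowhere-vanishing one-form defining the foliation, and $H^1(M,\R)=0$ plus compactness yields the contradiction. The genuine gap is your reduction step. The dichotomy you assert --- that on each connected component a proper surjective symplectic realization of a regular corank-one Poisson manifold is either a submersion or maps into a single leaf --- is not argued, and it is in fact false; note that your sketch of it does not use $H_1(M,\R)=0$, so it would have to be a general structural fact. Counterexample: let $(\Sigma,\omega_\Sigma)$ be a compact symplectic surface, let $M=\Sigma\times\mathbb{S}^1$ carry the regular corank-one structure with leaves $\Sigma\times\{\theta\}$, let $S=\Sigma\times\mathbb{T}^2$ with the product symplectic form, and let $\mu=\mathrm{id}_\Sigma\times h$ with $h:\mathbb{T}^2\to\mathbb{S}^1$, $h(\theta_1,\theta_2)=\theta_1+\sin\theta_1$. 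Then $\mu$ is Poisson (the pushforward of the $\mathbb{T}^2$-bivector vanishes since the circle factor of the target is one-dimensional), proper and surjective, $S$ is connected and $\mu$ does not factor through a leaf, yet $\mu$ fails to be a submersion along $\Sigma\times\{\theta_1=\pi\}$. This is no counterexample to the corollary (here $H_1(M,\R)\neq 0$), but it shows the rank of a proper surjective realization can drop on a nonempty closed set, so you cannot reach the hypotheses of Proposition \ref{pro : unimodular poisson} by ``restricting to components''; restricting to the open submersive locus instead destroys properness (and possibly surjectivity), so the reduction has no evident repair.

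This is precisely why the paper proves the corollary by a different route: the foliation is coorientable (leafwise symplectic orientation together with the orientation of $M$), hence admits a transverse circle by Moerdijk--Mr\v{c}un; this circle is a nonempty compact Poisson transversal whose class in $H_1(M,\R)=0$ is trivial, contradicting Corollary \ref{cor : proper symplectic realizations}. That corollary rests on Theorem \ref{thm : closed and compact PTs and proper Poisson maps}, which needs only properness of the Poisson map: the preimage of the transversal is automatically a compact Poisson transversal of the symplectic realization, with no submersivity required. If you wish to keep the unimodularity argument, you must either add submersivity as a hypothesis (as the paper does for its second proof) or supply a new idea for the non-submersive case.
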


\begin{proof}
A coorientable, codimension-one foliation on a compact manifold always admits a transverse circle (see e.g.\ \cite{MoerMrc}). In our case, the circle is a compact Poisson transversal, and since $H_1(M,\mathbb{R})=0$, the HNPT property does not hold. Thus, Corollary \ref{cor : proper symplectic realizations} implies the result.
\end{proof}

A second proof of this corollary, suggested by R.\ L.\ Fernandes in the case when the map is also submersive, observes that by \cite[Proposition 7.8]{PMCT1} the existence of a proper submersive symplectic realization implies unimodularity, which in this case (see Example \ref{ex : reg}) implies that the foliation is given by a nowhere-vanishing closed one-form, which cannot exist on a compact manifold $M$ with $H_1(M,\R)=0$. In fact, the following more general property holds: %under the assumption that the Corollary \ref{cor : proper symplectic realizations} is a consequence of Theorem \ref{thm : main}:

\begin{proposition}\label{pro : unimodular poisson}
Let $f : (P,\pi_P) \to (M,\pi_M)$ be a surjective, proper Poisson submersion. If $(P,\pi_P)$ is unimodular, then also $(M,\pi_M)$ is unimodular.
\end{proposition}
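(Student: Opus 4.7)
The plan is to construct an invariant positive density on $M$ by fibre-integrating one on $P$. Fix a Hamiltonian-invariant positive density $\mu_P\in \Omega^{\mathrm{top}}(P,\ro_P)$ witnessing the unimodularity of $(P,\pi_P)$. Since $f$ is a proper submersion, the canonical identification $\ro_P \cong f^*\ro_M \otimes \ro_{\mathrm{vert}}$ (with $\ro_{\mathrm{vert}}$ the orientation bundle along the fibres of $f$) allows us to integrate along the fibres, yielding a pushforward
\[
f_*\colon \Omega^{\mathrm{top}}(P,\ro_P)\longrightarrow \Omega^{\mathrm{top}}(M,\ro_M).
\]
Set $\mu_M := f_*\mu_P$. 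Surjectivity and properness of $f$ ensure every fibre $f^{-1}(x)$ is nonempty and compact, so fibrewise integration of the strictly positive $\mu_P$ produces a strictly positive value at each $x\in M$; hence $\mu_M$ is a positive density on $M$.

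The main verification is that $\mu_M$ is invariant under every Hamiltonian vector field. The geometric input is the standard fact that, since $f$ is a Poisson map, for each $h\in C^\infty(M)$ the Hamiltonian vector fields on $P$ and $M$ are $f$-related:
\[
Tf\circ \pi_P^{\sharp}\bigl(\dd(h\circ f)\bigr)=\pi_M^{\sharp}(\dd h)\circ f.
\]
Fibre integration commutes with the Lie derivative along $f$-related vector fields; this is easily checked by pairing $\mathscr{L}_{\pi_M^{\sharp}(\dd h)}\mu_M$ against an arbitrary compactly supported test function on $M$, pulling it back via $f$, and integrating by parts twice. Applying this to our situation yields
\[
\mathscr{L}_{\pi_M^{\sharp}(\dd h)}\,\mu_M \;=\; f_*\!\bigl(\mathscr{L}_{\pi_P^{\sharp}(\dd(h\circ f))}\,\mu_P\bigr)\;=\;0,
\]
where the last equality is precisely the unimodularity of $(P,\pi_P)$. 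As $h$ ranges over $C^\infty(M)$ this is the defining Hamiltonian invariance of $\mu_M$, so $(M,\pi_M)$ is unimodular.

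The main obstacle I foresee is not conceptual but notational: the careful setup of the twisted pushforward $f_*$ and the verification that it intertwines Lie derivatives along $f$-related vector fields require some bookkeeping with the three orientation bundles $\ro_P$, $\ro_M$, and $\ro_{\mathrm{vert}}$. Once these identifications are fixed, both the positivity of $\mu_M$ (from surjectivity and properness) and the $f$-relatedness of the relevant Hamiltonian vector fields (from $f$ being a Poisson map) are direct, and the argument closes immediately.
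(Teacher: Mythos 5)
Your proposal is correct and follows essentially the same route as the paper: push the invariant density down by fibre integration (using $\ro_P\simeq f^*\ro_M\otimes\ro_{\mathrm{vert}}$), note positivity from properness and surjectivity, and deduce invariance from the $f$-relatedness of the Hamiltonian vector fields $\pi_P^{\sharp}(\dd(h\circ f))$ and $\pi_M^{\sharp}(\dd h)$ together with the fact that fibre integration intertwines Lie derivatives along $f$-related fields. Your distributional/integration-by-parts verification of that intertwining is a perfectly adequate substitute for the paper's direct appeal to the same general property.
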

\begin{proof}
We will use the map of integration along the fibres
\[f_{\%}:\Omega^{\bullet}(P,\ro_P)\rmap \Omega^{\bullet-p+m}(M,\ro_M),\]
where $p=\mathrm{dim}(P)$ and $m=\mathrm{dim}(M)$. This is defined as follows. First, note that there is a canonical isomorphism $\ro_M\simeq \ro_{f}\otimes f^*(\ro_P)$, where $\ro_f$ is the orientation bundle of $\ker (f_*)$. Let $\omega\in \Omega^{\bullet}(P,\ro_P)$. At any $x\in M$, there is a finite sum decomposition
\[\omega|_{f^{-1}(x)}=\sum_i \omega^i_{f^{-1}(x)}\otimes \omega^i_{M,x},\]
where $\omega^i_{f^{-1}(x)}\in \Omega^{\bullet}(f^{-1}(x),\ro_{f^{-1}(x)})$ and $\omega^i_{M,x}\in \wedge^{\bullet}T_x^*M\otimes \ro_{M,x}$. Then
\[f_{\%}(\omega)_x=\sum_i\Big(\int_{f^{-1}(x)}\omega^i_{f^{-1}(x)}\Big)\omega^i_{M,x}\in \wedge^{\bullet}T_x^*M\otimes \ro_{M,x}.\]

Note also the following general property of fibre integration. For a vector field $\widetilde{v}$ on $P$ which is $f$-related to a vector field $v$ on $M$, we have that $\mathscr{L}_{\widetilde{v}}f_{\%}(\omega)=f_{\%}(\mathscr{L}_{v}\omega)$.

Let $\mu \in \Omega^{p}(P,\ro_P)$ be an positive, invariant density on $(P,\pi_P)$. Note that $f_{\%}(\mu)\in \Omega^{m}(M,\ro_M)$ is a positive density on $M$. By applying the above property to the Hamiltonian vector fields $\widetilde{v}=\pi_P^{\sharp}(\dd f^*(a))$ and $v=\pi_M^{\sharp}(\dd a)$, for $a\in C^{\infty}(M)$, we obtain that $f_{\%}(\mu)$ is invariant under the Hamiltonian vector fields of $\pi_M$. Hence, $(M,\pi_M)$ is unimodular.
\end{proof}

\subsection{Poisson manifolds with closed leaves}\label{sec : closed leaves}

In this section, we prove that Poisson manifolds with closed leaves have the HNPT property. An important class of such Poisson manifolds, the so-called \emph{Poisson manifolds of compact/proper type}, are currently the subject of much investigation \cite{PMCT1,PMCT2,DMT14}.

Let us note the following direct consequence of Theorems \ref{thm : main} and \ref{thm : closed and compact PTs and proper Poisson maps}:

\begin{corollary}\label{cor : unimodular Poisson submanifold}
Let $X$ be a compact Poisson transversal in a Poisson manifold $(M,\pi)$. If $X$ meets a closed, embedded, unimodular Poisson submanifold, then $[X]\neq 0$ in $H_{\bullet}(M,\ro_M)$.

In particular, if $X$ meets a closed symplectic leaf, then $[X]\neq 0$ in $H_{\bullet}(M,\ro_M)$.
\end{corollary}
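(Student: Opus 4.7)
The plan is to deduce this directly from the two main theorems already proved: Theorem \ref{thm : main} (unimodular implies HNPT) together with the functoriality statement in Theorem \ref{thm : closed and compact PTs and proper Poisson maps}. Let $N\subset M$ be the closed, embedded, unimodular Poisson submanifold that $X$ meets, and denote by $\iota:(N,\pi_N)\hookrightarrow (M,\pi_M)$ the inclusion. Since $N$ is a Poisson submanifold, $\iota$ is a Poisson map; since $N$ is closed in $M$, the map $\iota$ is proper (preimages of compacts in $M$ are closed subsets of compacts, hence compact). By Theorem \ref{thm : main}, $(N,\pi_N)$ has the HNPT property, and by hypothesis $X\cap \iota(N)=X\cap N\neq\varnothing$. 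Theorem \ref{thm : closed and compact PTs and proper Poisson maps} then gives at once $[X]\neq 0$ in $H_{\bullet}(M,\ro_M)$.

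For the ``in particular'' clause, let $L$ be a closed symplectic leaf of $(M,\pi)$ met by $X$. Every symplectic leaf is canonically a Poisson submanifold, and being symplectic it is unimodular (as observed right after the definition of unimodularity, the density $\omega^{k}\otimes o_\omega$ is Hamiltonian-invariant). The only remaining point is that a closed leaf is automatically embedded; this is precisely the content of the result proved in Appendix B. Hence $L$ satisfies the hypotheses of the first part of the corollary, and the conclusion follows. No real obstacle is anticipated here, the proof being essentially a bookkeeping assembly of Theorems \ref{thm : main} and \ref{thm : closed and compact PTs and proper Poisson maps} together with the embedding statement from Appendix B; the only subtlety worth flagging is the verification that the inclusion of a closed Poisson submanifold is proper, which is immediate but should be stated explicitly to invoke Theorem \ref{thm : closed and compact PTs and proper Poisson maps} cleanly.
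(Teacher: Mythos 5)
Your proposal is correct and follows exactly the route the paper intends: the corollary is stated there as a direct consequence of Theorems \ref{thm : main} and \ref{thm : closed and compact PTs and proper Poisson maps}, applied to the proper Poisson inclusion of the closed embedded unimodular Poisson submanifold, with the embeddedness of closed symplectic leaves supplied by Appendix B. Your explicit remark that closedness of the submanifold gives properness of the inclusion is precisely the small verification the paper leaves implicit.
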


Closed symplectic leaves are automatically embedded submanifolds, this is the reason why being embedded is not a hypothesis in the second part of the corollary. The proof of this fact is not easily found in the literature, therefore we have included one in Appendix B.

Corollary \ref{cor : unimodular Poisson submanifold} implies the following:

\begin{theorem}\label{thm : all leaves closed implies HNPT}
A Poisson manifold with closed leaves has the HNPT property.
\end{theorem}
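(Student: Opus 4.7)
The plan is to reduce directly to Corollary \ref{cor : unimodular Poisson submanifold}, which already does all the real work. Given a compact, nonempty Poisson transversal $X\subset(M,\pi)$, I need only produce a closed, embedded, unimodular Poisson submanifold that $X$ meets; then the corollary immediately yields $[X]\neq 0$ in $H_{\bullet}(M,\ro_M)$.

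First I would pick any point $x\in X$, which exists since $X$ is nonempty, and let $L$ be the symplectic leaf of $\pi$ through $x$. By hypothesis $L$ is closed in $M$, and by the result cited from Appendix B (closed leaves of Poisson manifolds are embedded submanifolds) $L$ is in fact a closed embedded Poisson submanifold. Since $L$ is a symplectic manifold it is automatically unimodular (an invariant positive density is $\omega_L^{\dim L/2}\otimes o_{\omega_L}$, as noted just before Lemma \ref{lem : closed}). Thus $L$ supplies exactly the object required by the hypothesis of Corollary \ref{cor : unimodular Poisson submanifold}, and $X$ meets $L$ at $x$ by construction.

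Applying the corollary, we conclude $[X]\neq 0$ in $H_{\bullet}(M,\ro_M)$. Since $X$ was an arbitrary compact, nonempty Poisson transversal, $(M,\pi)$ has the HNPT property.

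There is essentially no obstacle: the substantive content has been packaged into Theorem \ref{thm : main} and its consequence Corollary \ref{cor : unimodular Poisson submanifold}, together with the embedding statement from Appendix B. The only thing to verify is that nonemptiness of $X$ forces a leaf to be met, which is automatic. If I wanted to avoid relying on Appendix B for the embedding of $L$, I could instead note that for the purposes of invoking Corollary \ref{cor : unimodular Poisson submanifold}, it suffices to treat $L$ abstractly as the image of the inclusion of its smooth-leaf structure and use properness of the inclusion (guaranteed by closedness of $L$) to push through Theorem \ref{thm : closed and compact PTs and proper Poisson maps} directly; but the cleaner route is the one above.
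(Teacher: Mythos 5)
Your proposal is correct and is exactly the paper's argument: the paper deduces this theorem directly from Corollary \ref{cor : unimodular Poisson submanifold} (in particular its second statement about closed symplectic leaves, whose embeddedness is supplied by Appendix B), precisely as you do by taking the leaf through a point of $X$ and noting it is closed, embedded, and unimodular because symplectic.
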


\begin{example}[Lie-Poisson spheres]
Let $\mathfrak{g}$ be a Lie algebra of compact type, and endow $\mathfrak{g}^{\ast}$ with an invariant inner product. The corresponding unit sphere becomes a Poisson submanifold of $(\mathfrak{g}^*,\pi_{\mathfrak{g}})$, denoted $(\mathbb{S}(\mathfrak{g}^{\ast}),\pi_{\mathbb{S}})$, and is called the {\bf Lie-Poisson sphere} corresponding to $\mathfrak{g}$. Lie-Poisson spheres are interesting examples of compact Poisson manifolds: they are not integrable (with a few exceptions); they have compact leaves; and, in the semi-simple case, their local deformation space (which is infinite dimensional) can be described explicitly \cite{Mar_def}.

Theorem \ref{thm : all leaves closed implies HNPT} implies that Lie-Poisson spheres have the HNPT property. Note that, for a compact Poisson transversal $X\subset\mathbb{S}(\mathfrak{g}^{\ast})$, it can happen that $[X]\neq 0$ only when $X=\mathbb{S}(\mathfrak{g}^{\ast})$, or when $X$ is a finite set of points, in which case $\mathbb{S}(\mathfrak{g}^{\ast})$ is symplectic, and so $\mathfrak{g}\simeq\mathfrak{so}(3)$.

In fact, it can be easily seen that the Lie-Poisson spheres are unimodular: if $\mu\in \wedge^{\mathrm{top}}\mathfrak{g}$ is non-zero, then $\iota_{\mathcal{E}}\mu|_{\mathbb{S}(\mathfrak{g}^{\ast})}$ is an invariant volume form, where $\mathcal{E}$ denotes the Euler vector field on $\mathfrak{g}^*$.
\end{example}

\subsection{Log-symplectic structures}\label{sec : log-symplectic}

In this section we discuss the homology of compact Poisson transversals in log-symplectic manifolds. Recall \cite{GMP} that a Poisson structure $\pi$ on a manifold $M$ of even dimension $2k$ is called {\bf log-symplectic} (or {\bf $b$-symplectic}) if its top power $\pi^k\in \Gamma(\wedge^{2k}TM)$ is transverse to the zero-section of $\wedge^{2k}TM$. The \textbf{singular locus} of the log-symplectic structure $\pi$ is the codimension-one submanifold where the rank is not maximal
\[Z(\pi)=\{x\in M \ | \ \pi^k_x=0\}.\]

Log-symplectic manifolds are not unimodular, unless they are symplectic \cite{GMP}.

We have that:
\begin{theorem}\label{thm : log weak HNPT}
Log-symplectic manifolds have the weak HNPT property.
\end{theorem}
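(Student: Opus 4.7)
The plan is to split into two cases depending on whether the compact Poisson transversal $X$ meets the singular locus $Z := Z(\pi)$, and in each case to appeal to an earlier result applied inside the saturation $\mathrm{St}(X)$, viewed as its own Poisson manifold.

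If $X \cap Z = \varnothing$, then $X$ meets no codimension-two leaves, so $\mathrm{St}(X) \subset M \setminus Z$; the restricted structure $\pi|_{\mathrm{St}(X)}$ is symplectic, hence unimodular, and Theorem \ref{thm : main} applied inside $\mathrm{St}(X)$ immediately yields $[X] \neq 0$ in $H_{\bullet}(\mathrm{St}(X), \ro_{\mathrm{St}(X)})$.

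If $X \cap Z \neq \varnothing$, I would apply Corollary \ref{cor : unimodular Poisson submanifold} inside $(\mathrm{St}(X), \pi|_{\mathrm{St}(X)})$, with $Z' := Z \cap \mathrm{St}(X)$ playing the role of the closed, embedded, unimodular Poisson submanifold met by $X$. The set $Z'$ is closed in $\mathrm{St}(X)$ because $Z$ is closed in $M$; embedded because log-symplectic means $\pi^k$ is transverse to the zero section of $\wedge^{2k}TM$; and Poisson because $Z$ is a union of codimension-two symplectic leaves. The substantive point is unimodularity of $(Z, \pi|_Z)$: here I would invoke the Guillemin-Miranda-Pires normal form, which around any point of $Z$ writes the symplectic form dual to $\pi$ as $\omega = \frac{\mathrm{d}t}{t} \wedge \mathrm{d}y_0 + \sum_{i \geq 1} \mathrm{d}x_i \wedge \mathrm{d}y_i$ in suitable coordinates, so that $Z = \{t=0\}$ is a regular corank-one Poisson manifold whose symplectic foliation is the kernel of the closed, nowhere-vanishing 1-form $\mathrm{d}y_0|_Z$. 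These local residue 1-forms patch into a globally defined closed, nowhere-vanishing 1-form on $Z$, which by Example \ref{ex : reg} exactly certifies $(Z, \pi|_Z)$ as unimodular. Unimodularity passes to the open Poisson submanifold $Z'$, so Corollary \ref{cor : unimodular Poisson submanifold} delivers $[X] \neq 0$ in $H_{\bullet}(\mathrm{St}(X), \ro_{\mathrm{St}(X)})$.

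The main obstacle I anticipate is confirming the unimodularity of the singular locus $Z$: the local normal form is standard, but patching the local residue 1-forms into a global closed, nowhere-vanishing form on $Z$ requires a small verification (or a direct appeal to the Mazzeo-Melrose residue along $Z$). Once this is granted, the rest of the proof is a routine application of results already in hand.
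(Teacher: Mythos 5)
Your proposal is correct and follows essentially the same two-case argument as the paper: if $X$ misses $Z(\pi)$, its saturation is symplectic and Theorem \ref{thm : main} applies; if $X$ meets $Z(\pi)$, one uses that the singular locus is a closed, embedded, unimodular Poisson submanifold (the unimodularity being exactly the GMP residue/closed defining one-form fact you sketch) together with Corollary \ref{cor : unimodular Poisson submanifold}. The only cosmetic difference is that you apply the corollary inside $\mathrm{St}(X)$ with $Z\cap\mathrm{St}(X)$, while the paper applies it in $M$ and then restricts non-triviality to $\mathrm{St}(X)$; both are fine.
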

\begin{proof}
Let $X$ be a nonempty, compact Poisson transversal in a log-symplectic manifold $(M,\pi)$. First, assume that $X$ meets the singular locus $Z(\pi)$. We have that $Z(\pi)$ is a closed, embedded, unimodular \cite{GMP} Poisson submanifold; therefore, by Corollary \ref{cor : unimodular Poisson submanifold}, $[X]\neq 0$ in $H_{\bullet}(M,\ro_M)$, and so also
in $H_{\bullet}(\mathrm{St}(X),\ro_{\mathrm{St}(X)})$. On the other hand, if $X\cap Z(\pi)=\varnothing$, then the saturation of $X$ is a union of connected components of the symplectic manifold $(M\backslash Z(\pi),\pi^{-1})$. Thus, $X$ is a symplectic submanifold of $\mathrm{St}(X)$, and so $[X]\neq 0$ in $H_{\bullet}(\mathrm{St}(X),\ro_{\mathrm{St}(X)})$.
\end{proof}

\begin{example}\label{ex : log are not HNPT 1}
Log-symplectic manifolds do not satisfy the HNPT property in general. For example, consider on the unit sphere $\mathbb{S}^2\subset \R^3$ the log-symplectic structure given in cylindrical coordinates $(r=1,\theta,z)$ by $\pi_{\mathbb{S}^2}=z\frac{\partial}{\partial z}\wedge \frac{\partial}{\partial \theta}$, and let $X=\{N,S\}$ consist of two points, $N$ on the northern hemisphere and $S$ on the southern hemisphere. Since the induced coorientations at $N$ and $S$ differ, we have that $[X]=[N]-[S]=0$. This example can be generalized to any orientable log-symplectic manifold which is not symplectic, by choosing two point on different sides of the singular locus.
\end{example}
\begin{example}\label{ex : log are not HNPT 2}
Note that $\pi_{\mathbb{S}^2}$ o Example \ref{ex : log are not HNPT 1} is invariant under the antipodal action of $\mathbb{Z}_2$, and that it descends to a log-symplectic structure $\pi_{\mathbb{P}^2}$ on the projective plane  $\mathbb{P}^2=\mathbb{S}^2/\mathbb{Z}_2$. In this case, for any point $P$ in the symplectic locus of $\pi_{\mathbb{P}^2}$, we have that $X=\{P\}$ is a Poisson transversal, but $[X]=0$, because $H_0(\mathbb{P}^2,\ro_{\mathbb{P}^2})=0$. This example can be generalized to any non-orientable log-symplectic manifold.
\end{example}

Nevertheless, the only issues that prevent a log-symplectic manifold from having the HNPT property are those discussed in Examples \ref{ex : log are not HNPT 1} and \ref{ex : log are not HNPT 2} above:

\begin{theorem}
A compact, connected, nonempty Poisson transversal of an orientable log-symplectic manifold has nontrivial homology class.
\end{theorem}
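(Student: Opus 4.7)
The plan is to split the argument into two cases depending on whether $X$ meets the singular locus $Z=Z(\pi)$. If $X\cap Z\neq\varnothing$, I would use that $Z$ is a closed, embedded, unimodular Poisson submanifold of $M$ (see \cite{GMP}) and apply Corollary \ref{cor : unimodular Poisson submanifold} directly to get $[X]\neq 0$ in $H_{\bullet}(M,\ro_M)$.

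If $X\cap Z=\varnothing$, connectedness becomes essential: it forces $X$ into a single connected component $U$ of the symplectic locus $M\setminus Z$, where $\omega:=\pi^{-1}|_U$ is a genuine symplectic form and $X\subset(U,\omega)$ is a compact symplectic submanifold whose canonical coorientation matches the symplectic orientation. Hence $\int_X\omega^p>0$ for $2p:=\dim X$. Since $M$ is orientable, $\ro_M$ is trivial, and it suffices to exhibit a smooth closed form $\alpha\in\Omega^{2p}(M)$ with $\int_X\alpha=\int_X\omega^p$; this will immediately force $[X]\neq 0$ in $H_{2p}(M,\R)=H_{2p}(M,\ro_M)$.

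To build such an $\alpha$, I would appeal to the Guillemin-Mi\-randa-Pires normal form in a tubular neighbourhood $\varrho:V\to Z$ whose closure is chosen disjoint from the compact set $X$: $\omega = \varrho^*\omega_Z+\tfrac{\dd r}{r}\wedge \varrho^*\theta$, for closed forms $\omega_Z\in\Omega^2(Z)$ and $\theta\in\Omega^1(Z)$. Since $\tfrac{\dd r}{r}\wedge \varrho^*\theta$ squares to zero, binomial expansion gives $\omega^p=\varrho^*\omega_Z^p+\dd\eta$ on $V\setminus Z$, with $\eta:=p\log|r|\cdot \varrho^*(\theta\wedge\omega_Z^{p-1})$. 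Picking a bump $\chi$ equal to $1$ on a smaller tubular neighbourhood of $Z$ and to $0$ outside $V$, I would set $\alpha:=\omega^p-\dd(\chi\eta)$: near $Z$ this coincides with the smooth form $\varrho^*\omega_Z^p$, while elsewhere it is manifestly smooth, so $\alpha$ is a smooth closed form on $M$. Since $\chi$ vanishes on a neighbourhood of $X$, $\alpha|_X=\omega^p|_X$ and $\int_X\alpha=\int_X\omega^p>0$, as required.

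The main technical input is the Guillemin-Miranda-Pires normal form near $Z$; granted this, the construction of $\alpha$ and verification of its smoothness, closedness, and pairing with $X$ reduce to direct computation.
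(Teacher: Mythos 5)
Your two-case structure is exactly that of the paper, and your first case coincides with it verbatim: Corollary \ref{cor : unimodular Poisson submanifold} applied to the singular locus. In the second case the paper also replaces the singular form by a globally smooth closed form agreeing with it away from $Z(\pi)$, but it does so by citing \cite{Mar_OT} for a closed $2$-form $\omega'$ on $M$ equal to $\omega=\pi^{-1}$ outside a tubular neighbourhood of $Z(\pi)$ disjoint from $X$, and then pairs $[(\omega')^k]$ with $[X]$; you instead reprove this extension step directly, at the level of $\omega^p$, via the explicit primitive $\eta=p\log|r|\,\varrho^*(\theta\wedge\omega_Z^{p-1})$ and a cut-off, which is correct and has the merit of being self-contained. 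One caveat: the semilocal normal form $\omega=\varrho^*\omega_Z+\tfrac{\dd r}{r}\wedge\varrho^*\theta$ of Guillemin--Miranda--Pires is established (by a Moser-type argument) for compact $Z(\pi)$, whereas the statement here assumes neither $M$ nor $Z(\pi)$ compact, only $X$. Fortunately your computation does not need the full normal form: on any tubular neighbourhood with defining function $r$ one has the Laurent-type decomposition $\omega=\beta+\tfrac{\dd r}{r}\wedge\varrho^*\theta$ with $\beta$ a smooth $2$-form on the neighbourhood and $\theta\in\Omega^1(Z(\pi))$, and $\dd\omega=0$ forces $\dd\theta=0$ and $\dd\beta=0$; your construction of $\alpha$ goes through verbatim with $\beta^{p-1}$ in place of $\varrho^*(\omega_Z^{p-1})$. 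This is precisely the point the paper makes when it remarks that the argument of \cite{Mar_OT} uses only that $Z(\pi)$ is closed and embedded, so with this adjustment your proof covers the same generality. A last, cosmetic point: the sign in $\int_X\omega^p>0$ depends on whether the chosen orientation of $M$, combined with the canonical coorientation of $X$, induces the symplectic orientation on $X$; non-vanishing of the integral, which is all the argument requires, is guaranteed by the connectedness of $X$, and this is how the paper phrases it.
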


\begin{proof}
If the Poisson transversal meets the singular locus, then the connectivity assumption is not needed, and the argument from the proof of Theorem \ref{thm : log weak HNPT} applies.

Assume that the Poisson transversal $X$ does not intersect the singular locus $Z(\pi)$. Consider the closed 2-form $\omega:=\pi^{-1}$, which is singular at $Z(\pi)$. Note that $X$ is orientable, because $M$ is orientable and $X$ is coorientable, and fix an orientation on $X$. Since $X$ is connected, $\int_X\omega^k\neq 0$, where $2k=\mathrm{dim}(X)$. Let $U\simeq (-1,1)\times Z(\pi)$ be a tubular neighborhood of $Z(\pi)$ in $M$ which does not meet $X$. It is shown in \cite{Mar_OT} that there exists a closed 2-form $\omega'$ on $M$ which coincides with $\omega$ outside of $U$ (note that the argument from \emph{loc.cit.}\ does not use compactness of $Z(\pi)$, but only that $Z(\pi)$ is closed and embedded). We obtain that
\[\langle[\omega']^k,[X]\rangle=\int_X(\omega')^k=\int_X\omega^k\neq 0,\]
which implies the conclusion.
\end{proof}

\subsection{The general Dirac case}\label{sec : HNPT Dirac}

The results presented in this paper have a natural and rather straightforward  generalization to the setting of Poisson transversals in Dirac manifolds, and we devote this section to explaining this. For the basics of Dirac geometry, we recommend \cite{ABM,H,Marco}.

Let $L\subset TM\oplus T^*M$ be a Dirac structure on a manifold $M$. Let
\[K_L\subset \wedge^{\bullet}T^*M\]
denote the \textbf{spinorial line bundle} corresponding to $L$. The \textbf{orientation double cover} of $K_L$ will be denoted by
\[\widetilde{M}_L:=(\mathrm{K}_L \diagdown M)/\mathbb{R}_{>0},\]
and the \textbf{orientation bundle} $K_L$ by
\[\ro_L:=(\widetilde{M}_L\times\R)/\mathbb{Z}_2.\]
Note that $\ro_L$ has a canonical flat connection, and a canonical metric. For Poisson structures, $\ro_L$ is isomorphic to the orientation bundle $\ro_M$ of $M$. The entire discussion in Appendix A can be directly adapted to this setting: one obtain the complex $(\Omega^{\bullet}(M,\ro_L),\dd)$ computing the cohomology $H^{\bullet}(M,\ro_L)$ of $M$ with coefficients in $\ro_L$; one can define the homology $H_{\bullet}(M,\ro_L)$ of $M$ with local coefficients in $\ro_L$; using the canonical metric on $\ro_L$, there is an induced integral pairing:
\begin{equation}\label{eq : pairing}
H^{\bullet}(M,\ro_L)\times H_{\bullet}(M,\ro_L)\rmap \R,
\end{equation}
which gives an isomorphism $H^{\bullet}(M,\ro_L)\simeq H_{\bullet}(M,\ro_L)^*$.\\

Next, we recall \cite{ABM,H} the various types of maps $f:(P,L_P)\to (M,L_M)$ between Dirac manifolds. For
\[a=u+\alpha\in T_xP\oplus T_x^*P\ \ \textrm{and}\ \  b=v+\beta\in T_{f(x)}M\oplus T_{f(x)}^*M,\]
we write $a\sim_f b$ if $v=f_*(u)$ and $\alpha=f^*(\beta)$. The map $f$ is called
\begin{itemize}
\item \textbf{backward Dirac} if, for all $x\in P$,
 \[L_{P,x}=f^{!}(L_{M,f(x)}):=\{ a : \exists\ b\in L_{M,f(x)} \ \textrm{s.t.}\ a\sim_fb\};\]
\item \textbf{transverse} if $f$ is transverse to the leaves of $L_M$, i.e.\ $\ker(f^*)\cap L_M=0$.
\end{itemize}

If $f$ is backward Dirac, then $L_P$ is determined by $L_M$, and we write $L_P=f^{!}(L_M)$.

The condition that $f$ be transverse to the leaves of $L_M$ implies that $f^{!}(L_M)$ is a smooth Dirac structure, and that $f:(P,f^{!}(L_M))\to (M,L_M)$ is a backward Dirac map \cite[Proposition 5.6]{H}. The transversality condition is equivalent to
\[f^*(K_{L_M})\subset \wedge^{\bullet}T^*P\]
being a line bundle, and in this case, $f^*(K_{L_M})=K_{L_P}$, \cite[Lemma 1.9]{ABM}.

Dually, we have the following notions. A map $f:(P,L_P)\to (M,L_M)$ is called
\begin{itemize}
\item \textbf{forward Dirac} if, for all $x\in P$,
\[L_{M,f(x)}=f_{!}(L_{P,x}):=\{ b : \exists\ a\in L_{P,x}\ \textrm{s.t.}\ a\sim_fb\};\]
\item \textbf{strong} if $\ker(f_*)\cap L_P=0$.
\end{itemize}

If $f$ is forward Dirac, then $L_P$ determines $L_M$ along the image of $f$.

Pointwise, the strong forward condition is dual to the transverse backward condition. Therefore, it can be characterized in terms of a dual notion, which we now discuss (see \cite{Eckhard_book}). The \textbf{co-spinorial line bundle} of a Dirac manifold $(N,L)$ is the real line bundle
\[C_L\subset \wedge^{\bullet}TN\]
consisting of elements $w\in \wedge^{\bullet}TN$ such that
\[u\wedge w+\iota_{\xi}w=0, \ \ \textrm{for all}\ u+\xi \in L.\]
There exists a natural isomorphism between the following bundles (see \cite[Sections 3.2-3.4, 4.1]{Eckhard_book})
\begin{equation}\label{eq : cospinor}
K_L\otimes \wedge^{\mathrm{top}}TN\diffto C_L,\ \ \ \varphi\otimes w\mapsto \iota_{\varphi}w.
\end{equation}

By duality, we have that $f:(P,L_P)\to (M,L_M)$ is a strong forward Dirac map if and only if, for every $x\in P$,
\begin{equation}\label{eq : cospinor under strong}
f_*(C_{L_P,x})=C_{L_M,f(x)}.
\end{equation}
Using also the canonical isomorphism above, we obtain that a strong forward Dirac map induces the following isomorphisms
\begin{equation}\label{eq : isos}
K_{L_P}\otimes \wedge^{\mathrm{top}}TP\simeq C_{L_P}\stackrel{f_*}{\simeq}f^*(C_{L_M})\simeq f^*(K_{L_M}\otimes \wedge^{\mathrm{top}}TM).
\end{equation}

\begin{definition}
A \textbf{Dirac transversal} in $(M,L)$ is an embedded submanifold $X\subset M$ which intersects the presymplectic leaves of $L$ transversally.

A \textbf{Poisson transversal} in $(M,L)$ is an embedded submanifold $X\subset M$ which intersects the presymplectic leaves of $L$ transversally and symplectically.
\end{definition}

\begin{example}
A Poisson transversal in a foliated manifold $(M,\mathcal{F})$ is an embedded submanifold $X\subset M$ which is of complementary dimension to $\mathcal{F}$ and meets each leaf transversally. A Poisson transversal in a manifold endowed with a closed two-form $(M,\omega)$ is an embedded submanifold $X$ for which $\omega|_X$ is symplectic.
\end{example}

The following result characterizing Poisson transversals is straightforward.
\begin{lemma}\label{lem : when DTs are PTs}
Let $i:X\hookrightarrow (M,L)$ be an embedded submanifold. The following conditions are equivalent:
\begin{enumerate}[(a)]
\item $X$ is a Poisson transversal;
\item $(TX \oplus N^*X)\cap L = 0$;
\item The top degree of $i^*(\mathrm{K}_L)\subset \wedge^{\bullet}T^*X$ vanishes nowhere;
\item $\mathrm{pr}(C_L|_X)=\wedge^{q}NX$, where $q=\mathrm{codim}(X)$ and $\mathrm{pr}:\wedge^{q}TM|_{X}\to \wedge^qNX$ is the natural projection.
\end{enumerate}
\end{lemma}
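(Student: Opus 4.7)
The plan is to prove the four conditions are pointwise equivalent at each $x\in X$. Let $V\subset T_xM$ denote the tangent to the presymplectic leaf through $x$, i.e.\ the image of $L_x$ under the projection to $T_xM$, so that the annihilator $V^\circ\subset T_x^*M$ equals $L_x\cap T_x^*M$, and write $\omega_L$ for the presymplectic 2-form on $V$ induced by $L_x$.

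For (a)$\Leftrightarrow$(b), transversality $T_xX+V=T_xM$ dualises to $N^*X\cap V^\circ=0$, which, since $V^\circ\subset L_x$, is equivalent to the nonexistence of a nonzero $(0,\alpha)\in L_x$ sitting in $TX\oplus N^*X$. Assuming transversality, I would then show that $(v,\alpha)\in L_x$ with $v\in T_xX$, $v\neq 0$, and $\alpha\in N^*X$ encodes exactly a radical vector of $\omega_L|_{T_xX\cap V}$: membership in $L_x$ forces $\alpha|_V=\omega_L(v,\cdot)$ modulo $V^\circ$, and the equality $N^*X+V^\circ=(T_xX\cap V)^\circ$ (a dimension count using transversality) lets one adjust $\alpha$ by an element of $V^\circ\subset L_x$ to land in $N^*X$ precisely when $v$ is radical. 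Thus (b) amounts to transversality plus symplecticity of $\omega_L|_{T_xX\cap V}$, i.e.\ to (a).

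For (b)$\Leftrightarrow$(c), I would invoke the standard local normal form for Dirac spinors: near $x$, $K_L$ is generated by $\varphi=e^B\wedge\theta_1\wedge\cdots\wedge\theta_k$, where $\theta_1,\ldots,\theta_k$ span $V^\circ$ and $B$ is any 2-form on $M$ extending $\omega_L$ along $V$. Pulling back, $i^*\varphi=e^{i^*B}\wedge i^*\theta_1\wedge\cdots\wedge i^*\theta_k$, and its degree-$\dim X$ component is $\tfrac{1}{j!}(i^*B)^j\wedge i^*\theta_1\wedge\cdots\wedge i^*\theta_k$ with $2j+k=\dim X$. This component is non-zero at $x$ iff (i) the $i^*\theta_l$ are linearly independent in $T_x^*X$, equivalently $V^\circ\cap N^*X=0$ (transversality), and (ii) $(i^*B)^j$ is a volume form on the common kernel $T_xX\cap V$, equivalently $\omega_L|_{T_xX\cap V}$ is symplectic. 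Together these are precisely (b).

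Finally, (c)$\Leftrightarrow$(d) follows from the canonical isomorphism (\ref{eq : cospinor}), $K_L\otimes\wedge^{\mathrm{top}}TM\simeq C_L$, $\varphi\otimes\mu\mapsto\iota_\varphi\mu$. After fixing a local volume $\mu$ and a splitting $TM|_X=TX\oplus N$, a direct linear-algebra check shows that for any inhomogeneous $\varphi$ the degree-$\dim X$ component of $i^*\varphi$ is non-zero at $x$ if and only if the degree-$q$ component of $\iota_\varphi\mu$ has non-zero image under $\mathrm{pr}:\wedge^qTM|_X\to\wedge^qNX$; applied to a local generator of $K_L$ this identifies (c) with (d). The only mildly delicate step is this last duality between pullback-to-$X$ and normal projection: it is purely formal, but keeping track of degrees and the chosen splitting takes some care, whereas the other steps are direct unwindings of the Dirac-geometric definitions.
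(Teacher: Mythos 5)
Your proof is correct: the pointwise reduction to the pair (transversality, nondegeneracy of $\omega_L$ on $T_xX\cap V$), the pure-spinor normal form $e^B\wedge\theta_1\wedge\cdots\wedge\theta_k$ for (b)$\Leftrightarrow$(c), and the contraction-into-a-volume duality via the isomorphism (\ref{eq : cospinor}) for (c)$\Leftrightarrow$(d) all check out, including the edge cases where the pulled-back $\theta_l$ become dependent or the degrees mismatch. The paper states this lemma without proof, calling it straightforward, and your argument is precisely the standard unwinding the authors intend, so there is nothing to compare beyond noting that your verification fills in the omitted details correctly.
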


Let $i:X\hookrightarrow (M,L)$ be a compact Poisson transversal. Since $i$ is transverse to the leaves, as discussed above, the pullback $i^!(L)$ is a Dirac structure on $X$ with spinorial line $K_{i^{!}(L)}=i^*(K_L)$. Since $X$ is a Poisson transversal, by Lemma \ref{lem : when DTs are PTs} (c), we have that $i^{!}(L)$ corresponds to a Poisson structure $\pi_X$ on $X$. Moreover, we obtain isomorphisms of line bundles:
\[i^*:K_L|_{X}\diffto K_{i^{!}(L)},\ \ \mathrm{pr}_{\mathrm{top}}:K_{i^{!}(L)}\diffto \wedge^{\mathrm{top}}T^*X,\]
where the second map is just the projection on the top degree component of $\wedge^{\bullet}T^*X$. The composition induces a flat isomorphisms of the associated orientation bundles
\begin{equation}\label{eq : iso}
\ro_L|_{X}\simeq \ro_{X}.
\end{equation}
Using this isomorphism and functoriality of homology with coefficients \cite[Chapter VI, 2]{Whitehead}, $X$ induces a homology class in \[[X]\in H_{\bullet}(M,\ro_L),\]
obtained by pushing forward the fundamental class of $X$ in $H_{\bullet}(X,\ro_X)$.

The compactly supported Poincar\'e dual of $X$ can be constructed on any tubular neighborhood $NX$ of $X$, as a closed form in $\eta\in \Omega_{c}^q(NX,\ro_{NX})$, where $q=\mathrm{codim}(X)$ and by $\ro_{NX}$ we have denoted the orientation bundle of the vector bundle $\wedge^qNX$. The restriction of $\eta$ to the fibres of $p:NX\to X$ is a compactly supported density, and the cohomology class $[\eta]=\PD[X]\in H_{c}^q(NX,\ro_{NX})$ is determined by the fact that $\eta$ integrates to 1 over these fibers. On the other hand, there are canonical isomorphisms $\ro_{NX}\simeq \ro_{M}|_{NX}\otimes p^*(\ro_X)$,
and, since $X$ is a Poisson transversal, $\ro_L|_{X}\simeq \ro_{X}$ (\ref{eq : iso}); thus, we obtain an isomorphism
\[\ro_{NX}\simeq (\ro_{M}\otimes \ro_{L})|_{NX}.\]
Using this map, we can push forward $\PD[X]$ and regard it as an element in
\[\PD[X]\in H_{c}^{q}(M,\ro_M\otimes \ro_L).\]

\begin{definition}
A Dirac manifold $(M,L)$ has the \textbf{HNPT property} (resp.\ the \textbf{weak HNPT property}) if any of its nonempty compact Poisson transversals $X$ has a non-trivial homology class $[X]$ in $H_{\bullet}(M,\ro_L)$ (resp.\ in $H_{\bullet}(\mathrm{St}(X),\ro_L)$, where $\mathrm{St}(X)$ is the saturation of $X$, i.e.\ the union of all presymplectic leaves that meet $X$).
\end{definition}

Let us now discuss the Dirac version of Theorem \ref{thm : main}. The line bundle $K_L\otimes \ro_L\subset \wedge^{\bullet}T^*M\otimes \ro_L$ is trivializable, and moreover, it carries a canonical orientation: for $\xi \in K_L$, with $\xi\neq 0$, the element $\xi\otimes [(\R_{>0}\cdot\xi,1)]$ is positive and the element $\xi\otimes [(\R_{>0}\cdot\xi,-1)]$ is negative. The Dirac manifold $(M,L)$ is called \textbf{unimodular} if $K_L\otimes \ro_L$ admits a closed, nowhere-vanishing section \cite{ELW,Marco}:
\[\mu\in \Gamma(K_L\otimes \ro_L)\subset \Omega^{\bullet}(M,\ro_L).\]

\begin{thm}
 Unimodular Dirac manifolds have the HNPT property.
\end{thm}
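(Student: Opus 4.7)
The plan is to follow the template of Theorem \ref{thm : main}, using the closed spinor section $\mu$ in place of the invariant density. Choose $\mu\in \Gamma(K_L\otimes\ro_L)\subset \Omega^{\bullet}(M,\ro_L)$ positive, nowhere-vanishing and closed, and decompose it by degree as $\mu=\sum_{k}\mu^{(k)}$ with $\mu^{(k)}\in\Omega^k(M,\ro_L)$. Since $\dd$ is homogeneous of degree $+1$, the equation $\dd\mu=0$ forces each component to be individually closed, so each determines a class $[\mu^{(k)}]\in H^k(M,\ro_L)$. In the Poisson special case, writing $\mu=e^{-\iota_{\pi}}\mu_0$ for $\mu_0$ a positive invariant density, these components recover (up to signs and factorials) the closed forms $\iota_{\pi^k}\mu_0$ of Lemma \ref{lem : closed}.

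Given a nonempty compact Poisson transversal $i:X\hookrightarrow M$ of codimension $2q$, set $p=m-2q=\dim X$. I would compute $\langle[\mu^{(p)}],[X]\rangle$ via the pairing (\ref{eq : pairing}). Since $X$ is transverse to the presymplectic leaves, the discussion in section \ref{sec : HNPT Dirac} gives $i^*K_L=K_{i^!L}$, so $i^*\mu\in\Gamma(K_{i^!L}\otimes\ro_L|_X)$, and its top-degree component equals $i^*\mu^{(p)}$. By condition (c) of Lemma \ref{lem : when DTs are PTs}, the projection $\mathrm{pr}_{\mathrm{top}}:K_{i^!L}\to\wedge^p T^*X$ is a nowhere-vanishing isomorphism of line bundles, whence $i^*\mu^{(p)}=\mathrm{pr}_{\mathrm{top}}(i^*\mu)$ is a nowhere-vanishing section of $\wedge^p T^*X\otimes\ro_L|_X$. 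Under the canonical isomorphism $\ro_L|_X\simeq\ro_X$ of (\ref{eq : iso})---which is built precisely out of $\mathrm{pr}_{\mathrm{top}}$---this gives a nowhere-vanishing density on $X$.

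The crux is a sign check: that, with all canonical orientations in play, $i^*\mu^{(p)}$ is a \emph{positive} density. This should be automatic from the construction, because the canonical orientation on $K_L\otimes\ro_L$ defined by $\xi\mapsto\xi\otimes[(\R_{>0}\cdot\xi,1)]$ is transported coherently through $i^*$ and $\mathrm{pr}_{\mathrm{top}}$ to the canonical orientation on the density bundle $\wedge^p T^*X\otimes\ro_X$; indeed, the very isomorphism (\ref{eq : iso}) is engineered so that positive spinors map to positive densities. Granting this, $\int_X i^*\mu^{(p)}>0$, so $[\mu^{(p)}]$ pairs nontrivially with $[X]$, which shows $[X]\neq 0$ in $H_p(M,\ro_L)$. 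The only nonroutine point is this orientation bookkeeping; modulo it, the argument is formally identical to the proof of Theorem \ref{thm : main}.
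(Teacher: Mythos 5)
Your proposal is correct and follows essentially the same route as the paper: choose a positive closed section $\mu$ of $K_L\otimes\ro_L$, pull it back along $i$, project to top degree, and use that the isomorphism $\ro_L|_X\simeq\ro_X$ (built from $i^*$ and $\mathrm{pr}_{\mathrm{top}}$) makes $\mathrm{pr}_{\mathrm{top}}i^*(\mu)$ a positive density, so its integral over $X$ is positive and $[X]\neq 0$. Your explicit degree decomposition and pairing with $[\mu^{(p)}]$ rather than $[\mu]$ is only a cosmetic difference, since the pairing is degree-wise, and your ``sign check'' is exactly the one-line observation the paper makes.
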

\begin{proof}
 Choose such a closed section $\mu$ which is positive. Let $i:X\hookrightarrow(M,L)$ be a  nonempty Poisson transversal. Using Lemma \ref{lem : when DTs are PTs}, and the canonical isomorphism $\ro_L|_X\simeq \ro_X$, we obtain that $\mathrm{pr}_{\mathrm{top}}i^{*}(\mu)\in \Omega^{\mathrm{top}}(X,\ro_X)$ is a positive density on $X$. Therefore, the pairing (\ref{eq : pairing}) gives
\[\langle[\mu],[X]\rangle=\int_{X}\mathrm{pr}_{\mathrm{top}}i^{*}(\mu)>0,\]
and so $[X]\neq 0$ in $H_{\bullet}(M,\ro_L)$.
\end{proof}

Next, we discuss the results of section \ref{sec : HNPT under maps} in the Dirac setting.
The following is a straightforward generalization of \cite[Lemma 7]{PT1}.

\begin{lemma}\label{lem : strong maps and pts}
Let $f : (P,L_P) \to (M,L_M)$ be a forward map between Dirac manifolds. If $X \subset (M,L_M)$ is a Dirac transversal, then $f$ is transverse to $X$, and $Y:=f^{-1}(X)\subset (P,L_P)$ is also a Dirac transversal. If in addition $f$ is strong and $X$ is a Poisson transversal, then $Y$ is also a Poisson transversal.
\end{lemma}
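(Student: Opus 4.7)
The plan is to dispatch the three assertions in sequence; write $\rho(L):=\{v\in TM : v+\beta\in L \text{ for some }\beta\}$ for the anchor of a Dirac structure $L\subset TM\oplus T^*M$. First, the forward Dirac hypothesis forces $\rho(L_M)|_{f(p)}\subset f_*(T_pP)$ at every $p\in f^{-1}(X)$, because every element of $L_M$ is the push-forward of some element of $L_P$. Combined with the Dirac transversal condition $T_{f(p)}X+\rho(L_M)|_{f(p)}=T_{f(p)}M$, this yields $T_{f(p)}X+f_*(T_pP)=T_{f(p)}M$, so $f$ is transverse to $X$ and $Y=f^{-1}(X)$ is an embedded submanifold of $P$ of codimension $q:=\mathrm{codim}(X)$. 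To see that $Y$ is a Dirac transversal, given $w\in T_pP$ I would decompose $f_*w=v+v'$ with $v\in T_{f(p)}X$ and $v'\in \rho(L_M)|_{f(p)}$, lift $v'$ via the forward condition to $u'\in \rho(L_P)|_{p}$ with $f_*u'=v'$, and observe that $f_*(w-u')=v\in T_{f(p)}X$ forces $w-u'\in T_pY$; hence $w\in T_pY+\rho(L_P)|_p$. Along the way, the induced quotient map $\bar f_*:N_pY\diffto N_{f(p)}X$ is an isomorphism at every $p\in Y$: injectivity from $T_pY=f_*^{-1}(T_{f(p)}X)$, surjectivity from $T_{f(p)}M=T_{f(p)}X+f_*(T_pP)$.

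Under the strong hypothesis and with $X$ a Poisson transversal, the plan is to invoke the cospinorial criterion (d) of Lemma \ref{lem : when DTs are PTs}. Fix a generator $w\in C_{L_P,p}$; by \eqref{eq : cospinor under strong}, $f_*w$ generates $C_{L_M,f(p)}$, and Lemma \ref{lem : when DTs are PTs}(d) applied to $X$ ensures that the projection $\mathrm{pr}((f_*w)^{(q)})$ of its degree-$q$ part to $\wedge^qN_{f(p)}X$ is non-zero. Since $f_*:\wedge^\bullet TP\rmap\wedge^\bullet TM$ preserves degree, $(f_*w)^{(q)}=f_*(w^{(q)})$, and a routine check on decomposables $u_1\wedge\cdots\wedge u_q$ shows that the square
\[
\begin{CD}
\wedge^qT_pP @>f_*>> \wedge^qT_{f(p)}M\\
@VV\mathrm{pr}V @VV\mathrm{pr}V\\
\wedge^qN_pY @>\wedge^q\bar f_*>> \wedge^qN_{f(p)}X
\end{CD}
\]
commutes. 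Because $\wedge^q\bar f_*$ is an isomorphism, $\mathrm{pr}(w^{(q)})\neq 0$ in $\wedge^qN_pY$, so Lemma \ref{lem : when DTs are PTs}(d) concludes that $Y$ is a Poisson transversal.

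The main obstacle is this last step. A direct attack via criterion (b) --- proving $(TY\oplus N^*Y)\cap L_P=0$ --- stumbles on the fact that an element $(u,\alpha)\in L_P$ with $u\in T_pY$ and $\alpha\in N^*_pY$ does push forward to some $(f_*u,\beta)\in L_M$ with $f_*u\in T_{f(p)}X$, but the cotangent part $\beta$ generally fails to lie in $N^*_{f(p)}X$, which blocks an immediate appeal to criterion (b) for $X$. Passing to cospinors, where strongness translates cleanly into the line-bundle isomorphism $f_*:C_{L_P}\diffto f^*C_{L_M}$, sidesteps this obstruction.
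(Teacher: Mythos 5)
Your proof is correct: the reduction of transversality to $\rho(L_M)_{f(p)}\subset f_*(T_pP)$, the lifting argument showing $T_pP=T_pY+\rho(L_P)_p$, the isomorphism $\bar f_*:N_pY\diffto N_{f(p)}X$, and the commuting square relating $\mathrm{pr}\circ f_*$ to $\wedge^q\bar f_*\circ\mathrm{pr}$ all check out, and the last step is a legitimate application of Lemma~\ref{lem : when DTs are PTs}(d) together with (\ref{eq : cospinor under strong}). Note, however, that the paper gives no proof of this lemma at all --- it is stated as a straightforward generalization of \cite[Lemma 7]{PT1} --- and the intended argument is the conormal one, which your closing remarks dismiss too quickly: the obstacle you describe for criterion (b) is not genuine. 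Along $Y$, transversality gives $N^*_pY=f^*(N^*_{f(p)}X)$ (indeed $f^*$ is injective on $N^*_{f(p)}X$ because $T_{f(p)}X+f_*(T_pP)=T_{f(p)}M$, and a dimension count gives equality), so any $u+\alpha\in(T_pY\oplus N^*_pY)\cap L_{P,p}$ can be written as $u+f^*(\beta)$ with $\beta\in N^*_{f(p)}X$; then, by the very definition of the forward image, $f_*(u)+\beta\in L_{M,f(p)}$, and since $f_*(u)\in T_{f(p)}X$, criterion (b) for $X$ forces $f_*(u)=0$ and $\beta=0$, whence $\alpha=0$ and $u\in\ker(f_*)\cap L_{P,p}=0$ by strongness. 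This direct route is the exact analogue of \cite[Lemma 7]{PT1}, needs no choice of generators or degree bookkeeping, and isolates precisely where strongness enters; your cospinor route is equally valid and has the merit of staying within the spinor/cospinor formalism the section develops, at the cost of invoking Lemma~\ref{lem : when DTs are PTs}(d) and the characterization (\ref{eq : cospinor under strong}).
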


The Dirac version of Theorem \ref{thm : closed and compact PTs and proper Poisson maps} holds for a proper, strong forward Dirac map $f:(P,L_P)\to (M,L_M)$ (instead of a Poisson map):

\begin{thm}\label{thm : closed and compact PTs and proper strong forward maps}
Let $f:(P,L_P)\to (M,L_M)$ be a proper, strong, forward map. If $(P,L_P)$ has the HNPT property, then the homology class of every compact Poisson transversal $X\subset M$ which meets $f(P)$ is nontrivial.
\end{thm}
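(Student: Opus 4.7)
The plan is to mirror the proof of Theorem \ref{thm : closed and compact PTs and proper Poisson maps} in the Dirac setting, using the machinery assembled in section \ref{sec : HNPT Dirac} to handle the twisted coefficients carefully. First, I would apply Lemma \ref{lem : strong maps and pts} to the compact Poisson transversal $X\subset M$: since $f$ is strong forward Dirac, the preimage $Y := f^{-1}(X)$ is a Poisson transversal in $(P,L_P)$; it is compact because $f$ is proper and $X$ is compact, and it is nonempty because $X\cap f(P)\neq \varnothing$. The HNPT hypothesis on $(P,L_P)$ therefore yields that $[Y]\neq 0$ in $H_{p-2q}(P,\ro_{L_P})$, where $p=\dim P$ and $2q=\mathrm{codim}(X)$.

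Next, I would translate the non-vanishing of $[Y]$ into non-vanishing of the compactly supported Poincar\'e dual $\PD[Y]\in H^{2q}_c(P,\ro_P\otimes \ro_{L_P})$, using the twisted Poincar\'e pairing (\ref{eq : pairing}) adapted to $P$. The bulk of the proof then reduces to establishing the functoriality identity
\[f^*(\PD[X])=\PD[Y]\qquad \text{in}\qquad H^{2q}_c(P,\ro_P\otimes \ro_{L_P}),\]
after which $\PD[Y]\neq 0$ forces $\PD[X]\neq 0$ in $H^{2q}_c(M,\ro_M\otimes \ro_{L_M})$, which in turn is equivalent to $[X]\neq 0$ in $H_\bullet(M,\ro_{L_M})$ by twisted Poincar\'e duality.

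To prove this functoriality I would adapt the tubular neighborhood argument from the proof of Theorem \ref{thm : closed and compact PTs and proper Poisson maps}. Pick tubular neighborhoods $NX\hookrightarrow M$ and $NY\hookrightarrow P$ so that $f$ restricts to a fibrewise linear map between them (using transversality from Lemma \ref{lem : strong maps and pts}). Choose a representative $\eta_X\in \Omega^{2q}_c(NX,\ro_{NX})$ of $\PD[X]$ whose fibrewise integral equals $1$; the required identity comes down to showing that $f^*\eta_X\in \Omega^{2q}_c(NY,\ro_{NY})$ has the analogous fibrewise-integral-one property, i.e.\ that $f$ is fibrewise orientation-preserving with respect to the twisting data $\ro_P\otimes \ro_{L_P}$ vs.\ $f^*(\ro_M\otimes \ro_{L_M})$.

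The main obstacle is precisely this matching of twisted orientations, which in the untwisted Poisson setting came for free from the Poisson condition but here must be extracted from the strong forward condition. The crucial input is the chain of canonical isomorphisms (\ref{eq : isos}),
\[K_{L_P}\otimes \wedge^{\mathrm{top}}TP\;\simeq\; C_{L_P}\;\stackrel{f_*}{\simeq}\;f^*(C_{L_M})\;\simeq\; f^*\bigl(K_{L_M}\otimes \wedge^{\mathrm{top}}TM\bigr),\]
which, upon passing to orientation bundles and restricting to $Y$, gives a flat isomorphism $\ro_P\otimes \ro_{L_P}|_Y\simeq f^*(\ro_M\otimes \ro_{L_M})|_Y$. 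Combined with characterization (d) of Lemma \ref{lem : when DTs are PTs}, which describes the coorientation of a Poisson transversal by the cospinorial line $C_L$, this isomorphism is compatible with the canonical coorientations of $Y$ and $X$; consequently $f$ preserves the fibrewise orientation of the normal bundles as sections of the corresponding twisting bundles, finishing the verification that $f^*\eta_X$ represents $\PD[Y]$ and thus completing the proof.
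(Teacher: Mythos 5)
Your proposal is correct and follows essentially the same route as the paper: apply Lemma \ref{lem : strong maps and pts} to get the compact nonempty Poisson transversal $Y=f^{-1}(X)$, use the isomorphism $\ro_P\otimes \ro_{L_P}\simeq f^*(\ro_M\otimes \ro_{L_M})$ coming from (\ref{eq : isos}) to define $f^*$ on twisted compactly supported cohomology, and adapt the tubular-neighborhood argument of Theorem \ref{thm : closed and compact PTs and proper Poisson maps} to show $f^*(\PD[X])=\PD[Y]$, forcing $[X]\neq 0$. Your extra attention to the coorientation matching via the cospinorial line (Lemma \ref{lem : when DTs are PTs}(d)) is exactly the point the paper leaves implicit in ``easily adapted''.
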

\begin{proof}
Let $X$ be a compact Poisson transversal of $L_M$ of codimension $q$. Then by Lemma \ref{lem : strong maps and pts}, $Y:=f^{-1}(X)$ is a compact Poisson transversal of $L_P$. As discussed above, the compactly supported Poincar\'e duals of these Poisson transversals are elements of:
\[\PD[X]\in H_c^{q}(M,\ro_M\otimes \ro_{L_M}),\ \ \PD[Y]\in H_c^{q}(P,\ro_P\otimes \ro_{L_P}).\]
By the isomorphisms (\ref{eq : isos}), the strong forward Dirac map $f$ induces an isomorphism (of flat bundles endowed with metrics):
\[\ro_P\otimes \ro_{L_P}\simeq f^*(\ro_M\otimes \ro_{L_M}),\]
so we have a well-defined map
\[f^*:H_c^{q}(M,\ro_M\otimes \ro_{L_M})\rmap H_c^{q}(P,\ro_P\otimes \ro_{L_P}).\]
The argument from the proof of Theorem \ref{thm : closed and compact PTs and proper Poisson maps} is easily adapted to conclude that
\[f^*(\PD[X])=\PD[Y].\]
So if $[Y]\neq 0$ then also $[X]\neq 0$.
\end{proof}

Since closed 2-forms are unimodular as Dirac structures, the Dirac versions of Corollary \ref{cor : proper symplectic realizations} holds if one replaces symplectic realizations by \emph{presymplectic realizations} \cite{BCWZ}, i.e. strong, forward Dirac maps $f:(P,\omega)\to (M,L)$, where $\omega$ is a closed 2-form.

\begin{cor}
A Dirac manifold which admits a surjective proper presymplectic realization has the HNPT property.
\end{cor}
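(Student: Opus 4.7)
The plan is to mirror the argument for Corollary \ref{cor : proper symplectic realizations} in the Dirac setting via three short steps: (i) show that the Dirac structure $L_\omega$ associated to the closed $2$-form $\omega$ on $P$ is unimodular; (ii) apply the Dirac version of Theorem \ref{thm : main} to conclude that $(P,L_\omega)$ has the HNPT property; and (iii) invoke Theorem \ref{thm : closed and compact PTs and proper strong forward maps} together with surjectivity of $f$ to transfer HNPT from $(P,L_\omega)$ to $(M,L)$.

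The only step with actual content is (i). The spinorial line bundle of $L_\omega=\{(v,\iota_v\omega):v\in TP\}$ is globally trivialized by the nowhere-vanishing inhomogeneous form
\[e^{-\omega}\ =\ 1-\omega+\tfrac{1}{2}\omega\wedge\omega-\cdots\ \in\ \Omega^{\bullet}(P),\]
as follows from the identity $\iota_v e^{-\omega}=-\iota_v\omega\wedge e^{-\omega}$, which verifies that $e^{-\omega}$ is annihilated (in the Clifford sense) by every element of $L_\omega$. Since $K_{L_\omega}$ is therefore trivial as a line bundle, so is $\ro_{L_\omega}$; let $\sigma$ be a positive parallel trivialization. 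Then the section
\[\mu\ :=\ e^{-\omega}\otimes\sigma\ \in\ \Gamma(K_{L_\omega}\otimes \ro_{L_\omega})\ \subset\ \Omega^{\bullet}(P,\ro_{L_\omega})\]
is nowhere-vanishing, and
\[\dd\mu\ =\ \dd(e^{-\omega})\otimes\sigma\ =\ (-\dd\omega\wedge e^{-\omega})\otimes\sigma\ =\ 0\]
because $\omega$ is closed. Hence $(P,L_\omega)$ is unimodular in the sense of the Dirac definition given before the Dirac version of Theorem \ref{thm : main}.

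Step (ii) is then immediate from that theorem: $(P,L_\omega)$ has the HNPT property. For step (iii), properness and the strong forward Dirac property of $f$ are hypotheses, and by surjectivity every nonempty compact Poisson transversal $X\subset M$ meets $f(P)=M$. Applying Theorem \ref{thm : closed and compact PTs and proper strong forward maps} to $f$ and $X$ yields $[X]\neq 0$ in $H_{\bullet}(M,\ro_L)$, so $(M,L)$ has the HNPT property, as required.

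No step in this plan poses a genuine obstacle: beyond citations, the only computation is the closedness of $e^{-\omega}\otimes \sigma$, which reduces to $\dd\omega=0$. The main bookkeeping point to keep in mind is the identification of the canonically trivial orientation bundle $\ro_{L_\omega}$, so that the unimodularity condition is checked with coefficients matching those in the statement of the Dirac version of Theorem \ref{thm : main}.
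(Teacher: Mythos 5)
Your proposal is correct and follows exactly the route the paper intends: the paper deduces this corollary in one line from the assertion that closed $2$-forms are unimodular as Dirac structures, combined with the Dirac versions of Theorems \ref{thm : main} and \ref{thm : closed and compact PTs and proper strong forward maps}. Your only addition is to spell out that assertion via the closed, nowhere-vanishing spinor $e^{-\omega}$ (tensored with the induced flat trivialization of $\ro_{L_\omega}$), which is a correct and welcome elaboration rather than a different argument.
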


The Dirac version of Proposition \ref{pro : unimodular poisson} also holds.
\begin{prop}\label{pro : unimodular dirac}
Let $f:(P,L_P)\to (M,L_M)$ be a strong forward Dirac map, which is a proper surjective submersion. If $L_P$ is unimodular, then also $L_M$ is unimodular.
\end{prop}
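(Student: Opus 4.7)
The plan is to mirror the proof of Proposition \ref{pro : unimodular poisson}: starting from a closed, nowhere-vanishing, positive section $\mu\in\Gamma(K_{L_P}\otimes \ro_{L_P})$ witnessing unimodularity of $L_P$, I will construct via fibre integration an analogous section of $K_{L_M}\otimes \ro_{L_M}$ and verify it is closed, positive and nowhere-vanishing.

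The first step is to set up the bundle identifications. Let $\ro_f$ denote the orientation bundle of the vertical subbundle $\ker(f_*)\subset TP$. The standard decomposition $\ro_P\simeq \ro_f\otimes f^*(\ro_M)$, together with the strong forward isomorphism $\ro_P\otimes \ro_{L_P}\simeq f^*(\ro_M\otimes \ro_{L_M})$ derived from (\ref{eq : isos}), gives
\[\ro_{L_P}\simeq \ro_f\otimes f^*(\ro_{L_M}).\]
Using this identification, fibre integration is well defined as a map
\[f_\%:\Omega^\bullet(P,\ro_{L_P})\rmap \Omega^{\bullet-p+m}(M,\ro_{L_M}),\qquad p=\dim P,\ m=\dim M,\]
exactly as in the proof of Proposition \ref{pro : unimodular poisson}; it satisfies $d\circ f_\%=f_\%\circ d$ since the fibres of the proper submersion $f$ are closed manifolds.

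The second and main step is to show that $f_\%$ carries $\Gamma(K_{L_P}\otimes \ro_{L_P})$ into $\Gamma(K_{L_M}\otimes \ro_{L_M})$. Combining (\ref{eq : isos}) with the decomposition $\wedge^{\mathrm{top}}TP\simeq \wedge^{\mathrm{top}}\ker(f_*)\otimes f^*(\wedge^{\mathrm{top}}TM)$, valid since $f$ is a submersion, yields a canonical isomorphism of line bundles
\[K_{L_P}\otimes \wedge^{\mathrm{top}}\ker(f_*)\simeq f^*(K_{L_M}).\]
Pointwise, this says that contracting a local section $\mu$ of $K_{L_P}$ with a local frame of $\wedge^{\mathrm{top}}\ker(f_*)$ along the fibre $f^{-1}(x)$ produces a form which, under the above isomorphism, takes values in $K_{L_M,x}$; integrating over the compact fibre $f^{-1}(x)$ then lands in $K_{L_M,x}\otimes \ro_{L_M,x}$.

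Once this is established, the proof concludes as in Proposition \ref{pro : unimodular poisson}: closedness of $\mu$ combined with $d\circ f_\%=f_\%\circ d$ implies $f_\%(\mu)$ is closed; positivity of $\mu$ with respect to the canonical orientation of $K_{L_P}\otimes \ro_{L_P}$, together with compactness and non-emptiness of each fibre, forces $f_\%(\mu)$ to be a positive, and in particular nowhere-vanishing, section of $K_{L_M}\otimes \ro_{L_M}$. I expect the main obstacle to be the bundle-theoretic check in the second step, namely verifying that the fibre integral of a $K_{L_P}$-section genuinely lands in the spinor line $K_{L_M}$ rather than merely in $\wedge^\bullet T^*M\otimes \ro_{L_M}$; this is precisely where the strong-forward hypothesis enters, through (\ref{eq : isos}).
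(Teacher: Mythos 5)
Your proposal is correct and takes essentially the same route as the paper: its proof likewise rests on the line bundle isomorphism $K_{L_P}\otimes\wedge^{\mathrm{top}}\ker(f_*)\simeq f^*(K_{L_M})$ (its (\ref{eq : push-forward spinor})), the induced flat isomorphism $\ro_{L_P}\otimes\ro_f\simeq f^*(\ro_{L_M})$, and the resulting fibre integration $f_{\%}$, which is a chain map sending closed positive sections of $K_{L_P}\otimes\ro_{L_P}$ to closed positive sections of $K_{L_M}\otimes\ro_{L_M}$. The only minor difference is that the check you flag at the end is carried out in the paper by the explicit pointwise computation $\varphi\otimes v\mapsto \iota_v\varphi=f^*(\psi)$ based on (\ref{eq : cospinor}) and (\ref{eq : cospinor under strong}), which shows that the isomorphism is realized by contraction, so that the fibrewise decomposition used in $f_{\%}$ really lands in the spinor line $K_{L_M}$ with the correct positivity.
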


\begin{proof}
Denote $p:=\mathrm{dim}(P)$, $m:=\mathrm{dim}(M)$ and $q:=p-m$. Let $x\in P$, and consider non-zero elements $\varphi\in K_{L_P,x}$ and $w\in \wedge^{p}T_xP$. By (\ref{eq : cospinor}), $\iota_{\varphi}w$ spans the co-spinorial line $C_{L_P,x}$, and so by (\ref{eq : cospinor under strong}) $f_*(\iota_{\varphi}w)$ spans the co-spinorial line $C_{L_M,f(x)}$. On the other hand, decomposing $w=v\wedge u$, with $v\in \wedge^q\ker_x(f_*)$, we have that $\iota_v\varphi=f^*(\psi)$, for some $\psi\in \wedge^{\bullet}T^*_{f(x)}M$. Since $f_*$ annihilates the components of $v$, we obtain that $f_*(\iota_{\varphi}w)=f_*(\iota_{\iota_v\varphi}u)=\iota_{\psi}f_*(u)$, which, again by (\ref{eq : cospinor}) implies that $\psi$ is a non-zero element of $K_{L_M,f(x)}$. We obtain an isomorphism of line bundles:
\begin{equation}\label{eq : push-forward spinor}
K_{L_P}\otimes \wedge^q\ker(f_*)\simeq f^*(K_{L_M}),\ \ \ \varphi\otimes v\mapsto \iota_{v}\varphi=f^*(\psi).
\end{equation}
This induces a flat metric preserving isomorphism of the orientation bundles
\[\ro_{L_P}\otimes \ro_f\simeq f^*(\ro_{L_M}).\]
Therefore, there is an integration along the fibres map
\[f_{\%}:\Omega^{\bullet}(P,\ro_{L_P})\rmap \Omega^{\bullet-q}(M,\ro_{L_M}),\]
defined as in the proof of Proposition \ref{pro : unimodular poisson}. Moreover, from (\ref{eq : push-forward spinor}), if follows that a positive section $\mu\in \Gamma(K_{L_P}\otimes\ro_{L_P})$ is sent to a positive section $f_{\%}(\mu)\in\Gamma(K_{L_M}\otimes\ro_{L_M})$. Finally, the conclusion follows because $f_{\%}$ is also a chain map (which is shown as in the classical case), so $f_{\%}(\mu)$ is also closed, if $\mu$ is closed.
\end{proof}

The above results imply that Corollary \ref{cor : unimodular Poisson submanifold} and Theorem \ref{thm : all leaves closed implies HNPT} extend directly to the Dirac setting:

\begin{cor3}
If a compact Poisson transversal meets a closed, embedded, unimodular Dirac submanifold (in particular, if it meets a closed presymplectic leaf), then its homology class is non-trivial.
\end{cor3}

\begin{thm}
 A Dirac manifold with closed leaves has the HNPT property.
\end{thm}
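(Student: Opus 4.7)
The plan is to mirror the proof of the Poisson analogue (Theorem \ref{thm : all leaves closed implies HNPT}), which was a one-line corollary of the ``unimodular Poisson submanifold'' statement; here the corresponding input is the Dirac Version of Corollary 3 stated just above, together with the observation that presymplectic leaves are unimodular Dirac submanifolds.

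More precisely, let $X \subset (M,L)$ be a nonempty compact Poisson transversal. Pick any point $x \in X$; it lies in some presymplectic leaf $\mathcal{O}$ of $L$, and since $X$ is Poisson-transverse to $\mathcal{O}$, in particular $X \cap \mathcal{O} \neq \varnothing$. By hypothesis every leaf of $L$ is closed, so $\mathcal{O}$ is a closed presymplectic leaf. The fact that closed leaves of Dirac manifolds are embedded submanifolds is the content of (the Dirac generalization of) the result of Appendix B alluded to after Corollary \ref{cor : unimodular Poisson submanifold}, and is what allows us to regard $\mathcal{O}$ as a closed, embedded Dirac submanifold of $(M,L)$ whose induced Dirac structure is the presymplectic form inherited from $L$.

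To invoke the Dirac Version of Corollary 3 it remains to observe that this Dirac structure is unimodular. This is exactly the remark made before the presymplectic realization corollary: a closed $2$-form $\omega$ on a manifold $N$ is unimodular as a Dirac structure, because its spinorial line $K_\omega$ is spanned by $e^{-\omega}$, which is canonically nowhere-vanishing and closed, and hence gives a closed positive section of $K_\omega \otimes \ro_\omega$. Applying the Dirac Version of Corollary 3 to $\mathcal{O} \subset (M,L)$ then yields $[X] \neq 0$ in $H_{\bullet}(M,\ro_L)$, establishing the HNPT property.

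I expect no real obstacles here: once the Dirac Version of Corollary 3 is in place the argument is entirely formal, and the only point that requires any care is the verification that closed presymplectic leaves are embedded, which is deferred to (the Dirac analogue of) Appendix B. If one wanted to spell out even more, the only mildly nontrivial step is that a presymplectic leaf inherits a genuine Dirac \emph{submanifold} structure in the sense needed by Corollary 3, but this is immediate from the description of leaves in terms of the pullback Dirac structure along the inclusion.
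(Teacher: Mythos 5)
Your proposal is correct and follows essentially the same route as the paper: the Dirac Version of Corollary 3 (whose ``in particular'' clause already covers closed presymplectic leaves, using that closed 2-forms are unimodular Dirac structures and that closed leaves are embedded by the leaf-like submanifold argument of Appendix B applied to Lie algebroid orbits) is applied to any leaf met by the transversal, exactly as the paper intends when it says the Poisson statements ``extend directly'' to the Dirac setting.
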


The Dirac analogue of log-symplectic structures was recently introduced in \cite[Definition 4.11]{Blohmann} under the name of log-Dirac structures, and we believe that also the result of section \ref{sec : log-symplectic} generalize to this setting. However, the theory of these structures has not been developed yet, and so we do not investigate this here.

\subsection*{Appendix A: (Co-)homology twisted by the orientation bundle}\label{appendix A}
We briefly recall the cohomology with values in the orientation bundle, for details see \cite[Chapter I \S 7]{BottTu82}. The \textbf{orientation bundle} of a manifold $M$ is the real line bundle
\[\ro_M:=(\widetilde{M}\times \R)/\mathbb{Z}_2,\]
where $\widetilde{M}$ is the orientation double cover of $M$. In other words, $\ro_M$ is trivial over each chart, and the transition function between two charts is multiplication by the sign of the Jacobian determinant of the change of chart map. The bundle $\ro_M$ carries a canonical flat connection, giving rise to a differential $\dd$ on the complex of $\ro_M$-valued differential forms $\Omega^{\bullet}(M,\ro_M)$. The resulting cohomology $H^{\bullet}(M,\ro_M)$ is the \textbf{cohomology of $M$ with values in the orientation bundle}. Note that an orientation on $M$ (if it exists) induces an isomorphism of complexes
\[(\Omega^{\bullet}(M,\ro_M),\dd)\simeq (\Omega^{\bullet}(M),\dd);\]
thus, one obtains the usual de Rham cohomology of $M$.

Recall also that elements in $\Omega^{\mathrm{top}}(M,\ro_M)$ are called (smooth) densities, and that compactly supported densities can be canonically integrated over $M$. This ensures the existence of a pairing:
\[\Omega^{\mathrm{top}-k}(M,\ro_M)\times \Omega^{k}_{c}(M)\stackrel{\wedge}{\rmap} \Omega^{\mathrm{top}}_c(M,\ro_M)\stackrel{\int_M}{\rmap}\R,\]
which descends to cohomology, and induces an isomorphism:
\[H^{\mathrm{top}-k}(M,\ro_M)\simeq H^k_c(M,\R)^*.\]

For an exposition of homology with local coefficients see \cite[Chapter VI, 2]{Whitehead}. Let us describe the complex $(C_{\bullet}(M,\ro_M),\partial)$ computing it.
For a continuous map $u:\Delta^q\to M$, where $\Delta^q$ is the standard $q$-simplex, consider the 1-dimensional vector space $V_u$ consisting of all the flat lifts $v:\Delta^q\to \ro_M$ of $u$ (i.e.\ $v(\Delta^q)$ is included in a single leaf of the foliation on $\ro_M$ induced by the flat connection). Then $C_{q}(M,\ro_M)=\oplus_{u\in C^0(\Delta^q,M)} V_{u}$, and $\partial$ is given by a formula similar to the that of the usual differential on the singular chains. The resulting \textbf{homology of $M$ with local coefficients in $\ro_M$} is denoted by $H_{\bullet}(M,\ro_M)$.

Note that if $m=\mathrm{dim}(M)$ and $u:\Delta^m\to M$ is a smooth embedding, then $u(\Delta^m)$ inherits an orientation from the standard orientation on $\Delta^m$. Thus $u$ has a canonical flat lift $u^c\in V_u$, $u^c:\Delta^m\to \widetilde{M}\subset \ro_M$.

Assume now that $M$ is compact, and consider a smooth triangulation of $M$ into embedded $m$-simplices $u_i:\Delta^m\to M$, $i=1,\ldots,n$. Define the \textbf{fundamental class of the compact manifold $M$} as
\[[M]:=\Big[\sum_{i=1}^n u^c_i\Big]\in H_m(M,\ro_M).\]
Standard arguments show that the sum $\sum_{i=1}^nu^c_i$ is indeed closed, and that the class it defines is independent of the triangulation. Note that, if $M$ is orientable, an orientation $\gamma$ induces an isomorphism $H_{\bullet}(M,\ro_M)\simeq H_{\bullet}(M,\R)$, under which $[M]\in H_m(M,\ro_M)$ corresponds to the fundamental class of the oriented manifold $[(M,\gamma)]\in H_m(M,\R)$. However, by working with local coefficients in $\ro_M$, the fundamental class is independent of the choice of orientation.

Any \textbf{compact cooriented submanifold} $X\subset M$ has a fundamental class
\[[X]\in H_{\mathrm{dim}(X)}(M,\ro_M).\]
This is defined by pushing forward the fundamental class of $X$, by using functoriality of homology with local coefficients (see \emph{loc.cit.}), and by using that a coorientation induces an isomorphism $\ro_X\simeq \ro_M|_X$.

Let us remark that also the de Rham theorem holds for (co-)homology twisted by the orientation bundle. First, note that $\ro_M$ comes with a fibre metric $\langle\cdot,\cdot \rangle_{\ro_M}$, defined so that elements of the form $[p,\pm 1]$ have length one. This can be used to induce an integral pairing:
\[(\eta, v)\in \Omega^q(M,\ro_M)\times C_q(M,\ro_M)\rmap \langle v, u^*\eta\rangle_{\ro_M}\in \Omega^q(\Delta^q)\stackrel{\int_{\Delta^q}}{\rmap} \R,\]
where $u:\Delta^q\to M$ is the projection of $v$. This pairing descends to a pairing \[\langle\cdot,\cdot\rangle: H^q(M,\ro_M)\times H_q(M,\ro_M)\rmap \R,\]
which is non-degenerate and induces a de Rham isomorphism:
\[H^q(M,\ro_M)\simeq H_q(M,\ro_M)^*.\]
For oriented manifolds, this is just the usual de Rham theorem. For non-orientable ones, this can also be deduced from the usual de Rham theorem by working on the oriented double cover $\widetilde{M}$ and identifying $H^q(M,\ro_M)$ (resp.\ $H_q(M,\ro_M)$) with the $-1$ eigenspace of the nontrivial deck transformation on $H^{q}(\widetilde{M},\R)$ (resp.\ $H_q(\widetilde{M},\R)$).

Note also that the fundamental class $[M]$ of a compact manifold $M$ is non-trivial in $H_{\bullet}(M,\ro_M)$; this can be easily seen by pairing with a positive density.

Let $X\subset M$ be a compact, cooriented submanifold of codimension $q$. Then $X$ has a compactly supported Poincar\'e dual $\PD[X]\in H^q_c(M,\R)$. A compactly supported closed form $\eta_X\in \Omega^q_c(M)$ is a representative of $\PD[X]$ if and only if, for any closed $(m-q)$-form $\omega\in \Omega^{m-q}(M,\ro_M)$, we have that:
\[\int_M\omega\wedge \eta_X=\int_X\omega|_X,\]
where $\omega|_X$ is viewed as an element of $\Omega^{m-q}(M,\ro_X)$ by using the isomorphism $\ro_M|_X\simeq \ro_X$ induced by the coorientation. In other words, $[X]$ and $\PD[X]$ give the same result when paired with elements in $H^{m-q}(M,\ro_M)$. In fact, $\eta_X$ can be chosen to be supported inside the image of any tubular neighborhood $NX\hookrightarrow M$ of $X$ in $M$ (we are considering tubular neighborhoods which, along $X$ induce the identity on normal bundles). The class $[\eta_X]\in H_c^{q}(NX,\R)$ is characterized by the property that it integrates to $1$ over the fibers of $NX$ (which are oriented!). For these claims, adapt the arguments in \cite[Chapter 1 \S 5, \S 6, \S 7]{BottTu82}.

\subsection*{Appendix B: Closed symplectic leaves}\label{appendix B}

In this section, we prove that closed leaves of Poisson manifolds are embedded. This result was used to deduce Corollary \ref{cor : unimodular Poisson submanifold} and Theorem \ref{thm : all leaves closed implies HNPT}. The result holds in more generality (see below); for foliations it seems well-known (e.g.\ \cite[Corollary 7.4]{Sharpe}), and for singular foliations it is stated in \cite[Proposition 2.2]{Dazord} and \cite[Lemma 1.4.5]{Ratiu}, however without a proof.

\begin{proposition}\label{prop : closed embedded}
A closed leaf of a Poisson manifold is an embedded submanifold.
\end{proposition}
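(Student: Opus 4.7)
The plan is to reduce the statement to a local claim via Weinstein's splitting theorem, and then combine a Cantor--Bendixson / Baire analysis with the transverse holonomy along $L$ to conclude.

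First, fix a point $x \in L$ and set $2k := \dim L$. I apply Weinstein's splitting theorem to obtain a chart $V \cong B \times N$ around $x$, where $B \subset \mathbb{R}^{2k}$ is an open ball carrying the standard symplectic Poisson structure, $N$ is a neighborhood of $0 \in \mathbb{R}^{m-2k}$, and $\pi|_V = \pi_B + \pi_N$ with $\pi_N(0) = 0$. The $2k$-dimensional symplectic leaves of $\pi|_V$ are exactly the slices $B \times \{y\}$ with $y$ in the zero locus of $\pi_N$; since every point of $L$ lies on a $2k$-dimensional leaf, one has
\[L \cap V = B \times T, \qquad T := \{y \in N : (0, 0, y) \in L\}.\]
It then suffices to show $T$ is discrete, since in that case shrinking the chart yields $L \cap V = B \times \{0\}$, an embedded submanifold, proving that $L$ is locally embedded at $x$.

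Two properties of $T$ are immediate. It is countable, because $L$ with its intrinsic topology (as an orbit of the cotangent Lie algebroid) is a connected, second-countable smooth manifold, so the preimage of $V$ in $L$ has at most countably many connected components. It is closed in $N$, because $L$ is closed in $M$. A Baire category argument then shows that the perfect kernel of $T$ is empty, since any non-empty perfect subset of Euclidean space is uncountable; consequently $T$ contains an isolated point, i.e., at least one point with Cantor--Bendixson rank $0$.

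The crucial step is to upgrade this to the statement that \emph{every} point of $T$ has rank $0$. For this I use that $L$ is path-connected in its intrinsic topology: for any $y_1, y_2 \in T$ there is a smooth path $\gamma$ in $L$ from $(0, 0, y_1)$ to $(0, 0, y_2)$. The transverse holonomy along $\gamma$, built from iterated Weinstein splittings along the path, yields a germ of local diffeomorphism between transversal slices at these two points which preserves the ambient leaf structure; it therefore restricts to a germ of homeomorphism $(T, y_1) \xrightarrow{\sim} (T, y_2)$. Since the Cantor--Bendixson rank is a topological invariant, it agrees at $y_1$ and $y_2$; thus it is constant on $T$, and combined with the previous paragraph it is $0$ throughout, so $T$ is discrete.

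I expect this last step to be the main obstacle: closedness and countability alone yield only that $T$ is scattered, not discrete -- the set $\{0\} \cup \{1/n : n \geq 1\}$ is countable, closed, and scattered yet not discrete. The essential additional input is the transverse holonomy, which transfers the local structure of $T$ between any two of its points and thereby propagates the single isolated point guaranteed by Baire to the whole of $T$.
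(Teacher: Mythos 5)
Your proposal is correct and takes essentially the same route as the paper: Weinstein splitting, countability and closedness of the transverse trace, the impossibility of a nonempty countable perfect set (Baire), and transport of the transverse germ by leaf-preserving transformations --- your path holonomy built from iterated splitting charts is exactly the paper's transitive action of $\mathrm{Diff}_S(M)$ constructed from those same charts, which yields the germ map $\theta$ sending the trace near one point into the trace near another. The only difference is the packaging: you propagate the isolated point supplied by Baire, whereas the paper argues contrapositively, propagating accumulation points to produce a countable perfect set and then deriving the contradiction.
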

\begin{proof}
Let $S$ be a closed symplectic leaf of a Poisson manifold $(M,\pi)$. By Weinstein's Splitting Theorem \cite[Theorem 2.1]{Wein}, for every $p\in S$ there exist an open neighborhood $U\subset M$ and a diffeomorphism $\varphi:U\diffto V\times W$, such that:
\begin{itemize}
\item $V$ is a connected open neighborhood of $p$ in $S$;
\item $W$ is an embedded submanifold of $M$ through $p$ which is transverse to $S$;
\item $\varphi|_V=\mathrm{id}_V$ and $\varphi|_W=\mathrm{id}_W$;
\item for any leaf $S'$, $\varphi(S'\cap U)=V\times \Lambda_{S'}$, where $\Lambda_{S'}\subset W$.
\end{itemize}

Denote by $\mathrm{Diff}_S(M)$ the group of diffeomorphisms of $M$ that send $S$ to itself. Using the above decomposition, for any point $q\in V$, one can build a compactly supported diffeomorphism in $\mathrm{Diff}_S(M)$ which sends $p$ to $q$. Hence, the orbits of $\mathrm{Diff}_S(M)$ on $S$ are open, and since $S$ is connected, we conclude that $\mathrm{Diff}_S(M)$ acts transitively on $S$.

We claim that $S$ being closed implies that $p$ is isolated in $\Lambda_S$. From this the conclusion follows: by shrinking $W$, we may assume that $\Lambda_{S}=\{p\}$. Since $p$ is arbitrary, we deduce that the open sets $U\cap S$, with $U$ open in $M$, form a basis for the topology of $S$; hence $S$ is embedded.

To prove the claim, let $q\in \Lambda_S$ and let $\chi\in \mathrm{Diff}_S(M)$ be a diffeomorphism sending $p$ to $q$. The map $\varphi\circ \chi$ is a diffeomorphism between a neighborhood of $p$ in $U$ and a neighborhood of $(p,q)$ in $V\times W$. Moreover, since $\chi$ preserves $S$, it sends the plaque $V$ into the plaque $V\times \{q\}$. This implies that $\chi$ induces a diffeomorphism transverse to these plaques:
\[\theta=\mathrm{pr}_W\circ \varphi\circ \chi|_{W}:W_p\diffto W_q,\]
where $W_p$ and $W_q$ are open neighborhoods in $W$ of $p$ and $q$, respectively. Again, since $\chi$ preserves $S$, $\theta(\Lambda_S\cap W_p)\subset \Lambda_S\cap W_q$. This implies that, if $p$ is an accumulation point of $\Lambda_S$, then so is $q$. Thus, assuming that $p$ is not isolated in $\Lambda_S$, we obtain that every point in $\Lambda_S$ is an accumulation point. On the other hand, since $S$ is second countable, $\Lambda_S$ is at most countable, and since $S$ is closed and $W$ is embedded, $\Lambda_S$ is closed in $W$. This implies that $\Lambda_S$ is a countable perfect set in $W$, which contradicts \cite[Theorem 2.43]{Rudin}; equivalently, Baire's Category Theorem is contradicted for the closed set $\Lambda_S$, because $\varnothing=\cap_{q\in \Lambda_S}(\Lambda_S\backslash\{q\})$. This concludes the proof.
\end{proof}

Inspired by the argument above, we introduce the following class of submanifolds.
\begin{definition}
A subset $S$ of a manifold $M$ is called a codimension $q$ \textbf{leaf-like} submanifold, if around every point in $S$ there exists an open set $U$, a submersion $h:U\to \R^q$ and an at most countable set $\Lambda\subset \R^q$ such that $U\cap S=h^{-1}(\Lambda)$.
\end{definition}

By standard arguments (see e.g.\ \cite[Theorem 2.7]{Sharpe}), it can be shown that a codimension $q$ leaf-like submanifold is indeed an immersed submanifold of codimension $q$, and moreover that it is an initial submanifold, which implies that its smooth structure is uniquely determined. Note that, if the sets $\Lambda$ are assumed to be finite, then one obtains the usual notion of embedded submanifolds. By first applying the submersion theorem, the argument in the proof of Proposition \ref{prop : closed embedded} yield:
\begin{proposition}
A closed, connected, leaf-like submanifold is embedded.
\end{proposition}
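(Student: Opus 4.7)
The plan is to run the same Baire-type argument as in the proof of Proposition \ref{prop : closed embedded}, but with the submersion theorem replacing the splitting theorem, and the transitive diffeomorphism action replaced by a clopen decomposition of $S$ in its manifold topology. First, given any $p\in S$, use the submersion theorem to put the defining submersion $h$ into normal form on an open neighborhood $U$ of $p$ in $M$, so that $U\simeq V\times W\to W$ is the projection with $V\subset\R^{m-q}$ and $W\subset\R^q$ open, and $S\cap U=V\times\Lambda$ for an at most countable $\Lambda\subset W$. Because $S$ is closed in $M$, the slice $\Lambda$ is closed in $W$.

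Second, let $B\subset S$ be the set of points where $S$ is locally embedded in $M$; equivalently, $p\in B$ iff the transverse coordinate $w_0=h(p)\in\Lambda$ is isolated in $\Lambda$ for some (hence every) plaque chart through $p$. Being in $B$ is constant on each plaque $V\times\{w_0\}\subset S\cap U$, so both $B$ and $S\setminus B$ are unions of plaques and hence open in the manifold topology of $S$ (in which the plaques are the connected components of $S\cap U$). By the hypothesis that $S$ is connected in its manifold topology, one of the two sets must be empty.

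Third, the Baire category theorem rules out $B=\emptyset$: in any plaque chart, the non-empty closed countable set $\Lambda\subset\R^q$ cannot be perfect, since a perfect subset of $\R^q$ is uncountable (cf.\ \cite[Theorem 2.43]{Rudin}); hence $\Lambda$ admits an isolated point $w_\ast$, and every point of the plaque $V\times\{w_\ast\}$ lies in $B$. We conclude $B=S$: each point of $S$ has a neighborhood in $M$ on which $S$ is embedded, so $S$ is embedded in $M$. The main subtlety is verifying that $S\setminus B$ is open in the correct topology, namely the manifold topology of the leaf-like structure, rather than in the subspace topology inherited from $M$, where it may fail — witness $\Lambda=\{0\}\cup\{1/n:n\geq 1\}$, for which isolated plaques accumulate onto the single non-isolated plaque. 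Only in the manifold topology does the connectedness hypothesis apply to force $B=S$.
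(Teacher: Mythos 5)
Your proof is correct, but it exploits connectedness by a different mechanism than the paper does. The proof indicated in the paper (reduce, via the submersion theorem, to the argument of Proposition \ref{prop : closed embedded}) is a homogeneity argument: flows of compactly supported vector fields tangent to the fibres of $h$ give enough $S$-preserving diffeomorphisms to act transitively on the connected $S$, and conjugating the local product charts by these transfers the property of being an accumulation point of $\Lambda$ from one transverse point to all others; hence a single non-isolated point would force $\Lambda$ to be a nonempty, closed, countable, perfect set, contradicting \cite[Theorem 2.43]{Rudin}. You avoid constructing any diffeomorphisms and instead use connectedness through the clopen (in the leaf topology) decomposition $S=B\sqcup(S\setminus B)$, invoking the Baire/countability input only to make $B$ nonempty; this is more elementary, and your insistence that ``connected'' be taken in the manifold topology is not pedantry but necessary (a trajectory spiralling onto a periodic orbit, together with that orbit, is a closed, subspace-connected, leaf-like subset that is not embedded). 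The one assertion you should actually prove, since the plaque-saturation and hence leaf-topology openness of $S\setminus B$ rests on it, is the parenthetical ``for some (hence every) plaque chart'': if $S$ is locally embedded at $p$, then $h(p)$ is isolated in $\Lambda$ for \emph{every} chart $(U,h,\Lambda)$ through $p$. This is the point where your route needs a short substitute for the paper's homogeneity; for instance, take a connected slice chart $O\subset U$ around $p$ with $S\cap O$ equal to the slice, and note that $(V\times\{h(p)\})\cap O$ is open in $S\cap O$ (equidimensional submanifold contained in it) and closed in $S\cap O$ (as $V\times\{h(p)\}$ is closed in $U$), so it equals $S\cap O$; if $h(p)$ were an accumulation point of $\Lambda$, points of other plaques would lie in $S\cap O$ arbitrarily close to $p$, a contradiction.
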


The class of leaf-like submanifolds includes the leaves of singular foliations in the sense of Stefan \cite{Stefan} (see also \cite{Jan}), because these, by definition, admit local splittings. In particular, by the splitting theorem for Lie algebroids \cite[Theorem 1.1]{Rui}, the orbits of a Lie algebroid are also leaf-like submanifolds.


\begin{thebibliography}{9}
\bibitem{ABM}A.~Alekseev, H.~Bursztyn, E.~Meinrenken, \emph{Pure spinors on Lie groups}, Asterisque no. \textbf{327} (2010) 129--197.
\bibitem{Blohmann}C.~Blohmann, \emph{Removable presymplectic singularities and the local splitting of Dirac structures}, \url{arXiv:1410.5298}.
\bibitem{BottTu82} R.~Bott, L.~Tu, \emph{Differential forms in algebraic topology}, Graduate Texts in Mathematics, 82. Springer-Verlag, 1982.
\bibitem{H}H.~Bursztyn, \emph{A brief introduction to Dirac manifolds}, Geometric and topological methods for quantum field theory, Cambridge Univ. Press, Cambridge (2013): 4--38.
\bibitem{BCWZ}H.~Bursztyn, M.~Crainic, A.~Weinstein, C.~Zhu, \emph{Integration of twisted Dirac brackets}, Duke Math. J. Volume 123, Number 3 (2004) 549--607.
%\bibitem{BLM}H.~Bursztyn, H.~Lima, E.~Meinrenken, \emph{Splitting theorems for Poisson and related structures}, \url{https://arxiv.org/abs/1605.05386}
\bibitem{CrFe2} M.~Crainic, R.~L.~Fernandes, \emph{Integrability of Poisson brackets}, J.~Differential Geom. \textbf{66} (2004), 71--137.
\bibitem{PMCT1}M.~Crainic, R.~L.~Fernandes, D.~M.~Torres, \emph{Poisson manifolds of compact types (PMCT 1)}, to appear in Journal f\"ur die reine und angewandte Mathematik, \url{https://arxiv.org/abs/1510.07108}
\bibitem{PMCT2}M.~Crainic, R.~L.~Fernandes, D.~M.~Torres, \emph{Regular Poisson manifolds of compact types (PMCT 2)}, \url{https://arxiv.org/abs/1603.00064}
%\bibitem{DaviKirk}J.~Davis, P.~Kirk, \emph{Lecture  notes  in  algebraic  topology}, volume 35 of Graduate  Studies  in  Mathematics, American Mathematical Society, Providence, RI (2001).
\bibitem{Dazord} P.~Dazord, \emph{Feuilletages \`a singularit\'es}. Nederl. Akad. Wetensch. Indag. Math. 47 (1985), no. 1, 21--39.
\bibitem{Dirac} P.~Dirac, \emph{Generalized Hamiltonian dynamics}, Canad.~J.~Math. \textbf{2}, (1950), 129--148.
\bibitem{Donaldson}S.~Donaldson, \emph{Symplectic submanifolds and almost-complex geometry}, J. Differential Geom. Volume 44, Number 4 (1996), 666--705.
%\bibitem{Donaldson_Riemann} S.~Donaldson, \emph{Riemann surfaces}, Oxford Graduate Texts in Mathematics, \textbf{22}. Oxford University Press, Oxford, 2011.
\bibitem{ELW}S.~Evens, J.~H.~Lu, A.~Weinstein, \emph{Transverse measures, the modular class and a cohomology pairing for Lie algebroids}, Quart. J. Math. Oxford Ser. (2) 50 (1999), no. 200, 417--436.
\bibitem{Rui} R.L.~Fernandes, \emph{Lie algebroids, holonomy and characteristic classes}, Adv. Math. 170 (2002) 119--179.
\bibitem{PT1} P.~Frejlich, I.~M\u{a}rcu\cb{t}, \emph{The Normal Form Theorem around Poisson Transversals}, \url{https://arxiv.org/abs/1306.6055}
\bibitem{PT1.5} P.~Frejlich, I.~M\u{a}rcu\cb{t}, \emph{Normal forms for Poisson maps and symplectic groupoids around Poisson transversals}, \url{https://arxiv.org/abs/1508.05670}
%\bibitem{PT1.75} P.~Frejlich, I.~M\u{a}rcu\cb{t}, \emph{On Dual Pairs in Dirac Geometry}, \url{https://arxiv.org/abs/1602.02700}
%\bibitem{Goldman} W.~Goldman, \emph{Topological components of spaces of representations},
%Invent. Math. , 93(3):557--607 (1988)
\bibitem{Marco}M.~Gualtieri, \emph{Generalized complex geometry}, Ann. of Math. (2), 174 no. 1 (2011), 75--123.
%\bibitem{Marco_thesis}M.~Gualtieri, \emph{Generalized complex geometry}, PhD Thesis, Oxford University (2003) \url{https://arxiv.org/abs/math/0401221}
\bibitem{GMP}V.~Guillemin, E.~Miranda, A.~R.~Pires, \emph{Symplectic and Poisson geometry on $b$-manifolds}, Adv. Math. \textbf{264} (2014), 864--896.
%\bibitem{Hatcher}A.~Hatcher, \emph{Algebraic topology}, Cambridge University Press, Cambridge (2002). %, and erratum \url{https://www.math.cornell.edu/~hatcher/AT/Pduality.pdf}
% \bibitem{Halperin}S.~Halperin, W.~Hildbert, \emph{Fibre integration and some of its applications}, Collectanea Mathematica 23.3 (1972), 195--212.
\bibitem{Jan} J.~Kubarski, \emph{About Stefan's definition of a foliation with singularities: a reduction of the axioms.} Bull. Soc. Math. France 118 (1990), no. 4, 391--394.
\bibitem{DMT13}D.~Mart\'{i}nez-Torres, \emph{Codimension one foliations calibrated by non-degenerate closed $2$-forms}, Pacific J. Math. \textbf{261(1)} (2013), 165 -- 217.
\bibitem{DMT14}D.~Mart\'{i}nez-Torres, \emph{A Poisson manifold of strong compact type}, Indag. Math. (N.S.) \textbf{25} (2014), no. 5, 1154--1159.
\bibitem{Mar_OT} I.~M\u{a}rcu\cb{t}, B.~Osorno Torres, \emph{On cohomological obstructions for the existence of log-symplectic structures}, J.\ Symplectic Geom.\ \textbf{12} (2014), no.\ 4, 863--866.
\bibitem{Mar_def} I.~M\u{a}rcu\cb{t}, \emph{Deformations of the Lie-Poisson sphere of a compact semisimple Lie algebra}, Compos. Math. \textbf{150} (2014), no. 4, 568--578.
\bibitem{Eckhard_book}E.~Meinrenken, \emph{Clifford algebras and Lie theory},  Ergebnisse der Mathematik und ihrer Grenzgebiete. 3. Folge Volume 58, Springer-Verlag, Heidelberg (2013).
%\bibitem{Meinrenken} E.~Meinrenken, \emph{Poisson Geometry from a Dirac perspective}, preprint, \url{https://arxiv.org/abs/1612.00559}
%\bibitem{Milnor}J.~Milnor, \emph{On the existence of a connection with curvature zero}, Comm. Math. Helv. 32 (1958), 215--223
\bibitem{MoerMrc} I.~Moerdijk, J.~Mr\v{c}un, \emph{Introduction to foliations and Lie groupoids}, Cambridge Studies in Advanced Mathematics, 91. Cambridge University Press, 2003.
%\bibitem{Mostow}G.D.~Mostow, \emph{Discrete subgroups of Lie groups}, Advances in Mathematics \textbf{15}, 112 -- 123 (1975) 53 -- 154 (1970).
\bibitem{Ratiu}J.~P.~Ortega, T.~S.~Ra\cb{t}iu, \emph{Momentum maps and Hamiltonian reduction}, Progress in Mathematics, 222. Birkhäuser Boston, Inc., Boston, MA, 2004.
% \bibitem{Ranicki}A.~Ranicki, \emph{Algebraic  and  geometric  surgery}, Oxford  Mathematical  Monographs, Oxford University Press (2002).
\bibitem{Rudin} W.~Rudin, \emph{Principles of mathematical analysis}. Third edition. International Series in Pure and Applied Mathematics. McGraw-Hill Book Co., New York-Auckland-D\"usseldorf, 1976.
\bibitem{Sharpe} R.~W.~Sharpe, \emph{Differential geometry. Cartan's generalization of Klein's Erlangen program.} Graduate Texts in Mathematics, 166. Springer-Verlag, New York, 1997.
\bibitem{Stefan} P.~Stefan, \emph{Accessible sets, orbits, and foliations with singularities}, Proc. London Math. Soc. (3) 29 (1974), 699--713.
\bibitem{Sterberg} S.~Sternberg, \emph{Local contractions and a theorem of Poincar\'e}, Amer. J. Math. \textbf{79} (1957), 809--824.
\bibitem{Wein} A.~Weinstein, \emph{The local structure of Poisson manifolds},
   J.~Differential Geom. \textbf{18} (1983), 523--557.
%\bibitem{Wein98} A.~Weinstein, \emph{Poisson geometry}, Differential Geom. Appl. 9 (1998), no. 1-2, 213 -- 238.
\bibitem{Weinst_mod} A.~Weinstein, \emph{The modular automorphism group of a Poisson manifold}, J. Geom. Phys. \textbf{23} (1997), no. 3--4, 379--394.
\bibitem{Whitehead} G.~W.~Whitehead, \emph{Elements of homotopy theory},
Graduate Texts in Mathematics, 61. Springer-Verlag, New York-Berlin, 1978.
\bibitem{Wood}J.~Wood, \emph{Bundles with totally disconnected structure group}, Comm. Math. Helv. 51 (1971), 183--199.
% \bibitem{Zakrzewski98} S.~Zakrzewski, \emph{Poisson structures on $\mathbb{R}^{2n}$ having only two symplectic leaves: the origin and the rest}, Poisson geometry (Warsaw, 1998), 11 -- 13, Banach Center Publ., 51, Polish Acad. Sci., Warsaw, 2000
\end{thebibliography}
\end{document}